\documentclass[11pt,reqno]{amsart}

\usepackage[T1]{fontenc}
\usepackage{lmodern}
\usepackage{microtype}
\usepackage{amsmath,amssymb,amsthm,mathtools,mathrsfs}
\usepackage{bm}
\usepackage{enumitem}
\usepackage{booktabs}
\usepackage{array}
\usepackage{xcolor}
\usepackage[colorlinks=true,linkcolor=blue!55!black,citecolor=blue!55!black,urlcolor=blue!55!black]{hyperref}
\usepackage[nameinlink,capitalise,noabbrev]{cleveref}
\usepackage[a4paper,margin=1.08in]{geometry}

\numberwithin{equation}{section}

\newtheorem{theorem}{Theorem}[section]
\newtheorem{proposition}[theorem]{Proposition}
\newtheorem{lemma}[theorem]{Lemma}

\theoremstyle{definition}

\theoremstyle{remark}

\crefname{theorem}{theorem}{theorems}
\Crefname{theorem}{Theorem}{Theorems}
\crefname{proposition}{proposition}{propositions}
\Crefname{proposition}{Proposition}{Propositions}
\crefname{lemma}{lemma}{lemmas}
\Crefname{lemma}{Lemma}{Lemmas}
\crefname{corollary}{corollary}{corollaries}
\Crefname{corollary}{Corollary}{Corollaries}
\crefname{remark}{remark}{remarks}
\Crefname{remark}{Remark}{Remarks}

\DeclareMathOperator{\Per}{Per}
\DeclareMathOperator{\Tr}{Tr}
\DeclareMathOperator{\diag}{diag}

\newcommand{\E}{\mathbb E}
\newcommand{\Pp}{\mathbb P}
\newcommand{\R}{\mathbb R}
\newcommand{\C}{\mathbb C}
\newcommand{\1}{\mathbf 1}
\newcommand{\dd}{\mathrm d}
\newcommand{\ii}{\mathrm i}

\newcommand{\weak}{\xrightarrow{\mathrm w}}

\newcommand{\Sym}{\mathrm{Sym}}
\newcommand{\Herm}{\mathrm{Herm}}

\title[Permanental roots of Gaussian random matrices]{Rotated semicircle laws for permanental roots of Gaussian random matrices}
\author{Renjie Feng}
\email{renjie.feng@sydney.edu.au}
\address{Sydney Mathematical Research Institute, University of Sydney}

\author{Dong Yao}
\email{dongyao@jsnu.edu.cn}
\address{School of Mathematics and Statistics/RIMS, Jiangsu Normal University, China}
\date{\today}

\begin{document}

\begin{abstract}
For matrices drawn from the standard Gaussian orthogonal ensemble (GOE) and
Gaussian unitary ensemble (GUE), we prove that the normalized zero counting
measure of the permanental characteristic polynomial
\(\Per(zI_N-H_N)\) converges almost surely to the standard Wigner semicircle
law on \([-2,2]\), rotated by \(\pi/2\) onto the imaginary axis.  This proves
the conjecture proposed by Fyodorov in \cite{Fyodorov2006}.
\end{abstract}

\maketitle

\section{Introduction}

For an \(N\times N\) complex matrix \(A=(a_{jk})\), its permanent is
\begin{equation}\label{eq:def-permanent}
  \Per A
  :=
  \sum_{\sigma\in S_N}\prod_{j=1}^N a_{j,\sigma(j)},
\end{equation}
and its permanental characteristic polynomial is
\[
  P_A(z):=\Per(zI_N-A).
\]
The polynomial \(P_A\) is monic of degree \(N\).  Its zeros, counted with
algebraic multiplicity, will be called the \emph{permanental roots} of \(A\).

The permanent connects combinatorics, graph theory, statistical mechanics,
and quantum physics.  If \(A\) is the \(0\)--\(1\) biadjacency matrix of a
bipartite graph, then \(\Per A\) counts its perfect matchings; for a
nonnegative weighted matrix, it gives the corresponding weighted matching
partition function.  For noninteracting identical particles, symmetrization
of multiparticle amplitudes leads to permanents for bosons, whereas
antisymmetrization leads to determinants for fermions.  In particular,
multiphoton transition amplitudes in passive linear-optical networks are
expressed through matrix permanents \cite{Scheel2004}.  This connection
underlies the BosonSampling model, which uses the computational difficulty of
permanents to construct sampling problems expected to be hard for classical
computers \cite{AaronsonArkhipov2013}.

In computational complexity theory, the permanent exhibits a striking
contrast with the determinant.  Determinants can be computed in polynomial
time by Gaussian elimination, whereas Valiant proved that computing the
permanent of a \(0\)--\(1\) matrix is \(\#\mathrm P\)-complete
\cite{Valiant1979}.  The permanent is therefore a canonical complete problem
in exact counting complexity.  The classical formulas of Ryser and Glynn
admit implementations using \(O(N2^N)\) arithmetic operations and hence remain
exponential in \(N\) \cite{Ryser1963,Glynn2010}.  For approximation, Jerrum,
Sinclair, and Vigoda constructed a fully polynomial-time randomized
approximation scheme for the permanent of a matrix with nonnegative entries
\cite{JerrumSinclairVigoda2004}.  Their theorem does not directly extend to
arbitrary signed or complex matrices, where cancellation between different
terms creates an additional difficulty for relative-error approximation.

Fyodorov \cite{Fyodorov2006} initiated the study of permanental
characteristic polynomials of random matrices and conjectured that, for the
GOE and GUE, the permanental roots have a global limiting density on the
imaginary axis given by a rotated semicircle law.  The present paper proves
this conjecture.

For \(\beta\in\{1,2\}\), let
\[
  \mathcal H_N^{(1)}=\Sym_N(\R),
  \qquad
  \mathcal H_N^{(2)}=\Herm_N(\C).
\]
We regard these spaces as finite-dimensional real Euclidean vector spaces and
write \(\dd H\) for their Lebesgue measures.  The standard Gaussian ensemble
on \(\mathcal H_N^{(\beta)}\) is the probability measure
\[
  \dd\Pp_{\beta,N}(H)
  =
  \frac{1}{\mathcal Z_{\beta,N}}
  \exp\!\left(-\frac{\beta N}{4}\Tr H^2\right)\dd H,
\]
where \(\mathcal Z_{\beta,N}\) is the normalizing constant.  For \(\beta=1\),
this is the GOE: the diagonal entries are independent \(N(0,2/N)\) random
variables, and the strict upper-triangular entries are independent
\(N(0,1/N)\) random variables.  For \(\beta=2\), this is the GUE: the
diagonal entries are independent \(N(0,1/N)\) random variables, while the
real and imaginary parts of each strict upper-triangular entry are independent
\(N(0,1/(2N))\) random variables; see
\cite[Sections~2.1, 2.2 and~2.5]{AndersonGuionnetZeitouni2010}.
Under this normalization, the empirical eigenvalue measures converge weakly
in probability to the
standard semicircle law \cite[Theorem~2.1.1]{AndersonGuionnetZeitouni2010},
\[
  \dd\mu_{\mathrm{sc}}(x)
  =
  \frac{1}{2\pi}\sqrt{4-x^2}\,
  \1_{[-2,2]}(x)\dd x.
\]

Fix \(\beta\in\{1,2\}\).  Let \((H_N)_{N\ge1}\) be a sequence of random
matrices defined on a common probability space such that
\(H_N\sim\Pp_{\beta,N}\) for every \(N\ge1\).   Define
\[
  P_N(z):=\Per(zI_N-H_N),
\]
and let \(\zeta_{1,N},\ldots,\zeta_{N,N}\) be its zeros, counted with
algebraic multiplicity.  The normalized zero counting measure is
\[
  \nu_{P_N}:=\frac1N\sum_{j=1}^N\delta_{\zeta_{j,N}},
\]
where \(\delta_\zeta\) denotes the Dirac probability measure at
\(\zeta\in\C\).

For a measurable map \(T:X\to Y\) and a Borel measure \(\mu\) on \(X\), we
write \(T_\#\mu\) for the push-forward measure on \(Y\), defined by
\[
  (T_\#\mu)(B):=\mu\bigl(T^{-1}(B)\bigr)
\]
for every Borel set \(B\subseteq Y\).  Let
\[
  \mathsf m_i:\C\to\C,
  \qquad
  \mathsf m_i(z)=\ii z.
\]
We write \(\mu_N\weak\mu\) for weak convergence of Borel probability
measures.

\begin{theorem}\label{thm:main}
With the notation above,
\[
  \nu_{P_N}
  \weak
  (\mathsf m_i)_\#\mu_{\mathrm{sc}}
  \qquad\text{almost surely.}
\]
Equivalently, for every bounded continuous function \(f:\C\to\C\),
\[
  \frac1N\sum_{j=1}^N f(\zeta_{j,N})
  \to
  \int_{-2}^{2}
  f(\ii x)\frac1{2\pi}\sqrt{4-x^2}\,\dd x
  \qquad\text{almost surely.}
\]
\end{theorem}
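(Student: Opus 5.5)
We would prove \cref{thm:main} by the logarithmic-potential method. The target measure \(\mu:=(\mathsf m_i)_\#\mu_{\mathrm{sc}}\) has logarithmic potential \(U(z)=\int_{-2}^2\log|z-\ii x|\,\dd\mu_{\mathrm{sc}}(x)\). The standard criterion is as follows: if \(\frac1N\log|P_N(z)|\to U(z)\) almost surely for Lebesgue-a.e.\ \(z\in\C\), together with uniform local \(L^1\)-control, then \(\nu_{P_N}\weak\mu\) almost surely. One then takes the distributional Laplacian, using \(\frac1{2\pi}\Delta\frac1N\log|P_N|=\nu_{P_N}\), and applies a Fubini/diagonal argument over a countable dense set of test functions. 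The plan is therefore to (i) compute the deterministic comparison function \(\E P_N\), (ii) identify its zero distribution, and (iii) show that \(\frac1N\log|P_N(z)|\) is asymptotically equal to \(\frac1N\log|\E P_N(z)|\).

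\textbf{Step 1: the mean polynomial.} Expanding the permanent gives \(P_N(z)=\sum_{\sigma}\prod_j(z\delta_{j\sigma(j)}-H_{j\sigma(j)})\). Group by the set of fixed points of \(\sigma\) that take the \(z\)-term. Within a single cycle, or across disjoint cycles, the off-diagonal entries \(H_{j\sigma(j)}\) involve distinct unordered pairs, except that a \(2\)-cycle \((jk)\) produces \(H_{jk}H_{kj}=|H_{jk}|^2\). Diagonal entries appear at most once. Since the entries are centred and independent up to symmetry, only the products made of \(z\)'s and \(2\)-cycles survive, each \(2\)-cycle contributing \(+1/N\) for both \(\beta=1\) and \(\beta=2\). (For the determinant, the sign of a transposition would make this \(-1/N\).) Hence
\[
\E P_N(z)=\sum_{m\le N/2}\frac{N!}{m!\,2^m(N-2m)!}\,z^{N-2m}N^{-m}
=(-\ii)^{N}N^{-N/2}\,\mathrm{He}_N\bigl(\ii\sqrt N z\bigr).
\]
The zeros of this polynomial are \(\ii x_j/\sqrt N\), where the \(x_j\) are the Hermite zeros. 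By the classical asymptotics (Plancherel--Rotach), their rescaled empirical measure tends to \(\mu_{\mathrm{sc}}\), and \(\frac1N\log|\E P_N(z)|\to U(z)\) locally uniformly off \(\ii[-2,2]\).

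\textbf{Step 2: the upper bound.} We would compute \(\E|P_N(z)|^2\) by the same Wick expansion over pairs of permutations \((\sigma,\tau)\). The goal is \(\E|P_N(z)|^2\le e^{o(N)}|\E P_N(z)|^2\) for a.e.\ \(z\), or at least \(\frac1N\log\E|P_N(z)|^2\to 2U(z)\). The reduction is to a combinatorial sum over cycle structures of \(\sigma\tau^{-1}\); it should be organised by a generating function, or equivalently by Fyodorov's Gaussian (bosonic) integral representation
\[
\Per(A)=\pi^{-N}\int_{\C^N}\prod_j \overline{u_j}\,(Au)_j\,e^{-|u|^2}\dd u ,
\]
after which averaging over \(H\) produces a finite-dimensional saddle point. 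Markov's inequality and Borel--Cantelli (the bound holds with \(e^{\varepsilon N}\) slack) then give \(\limsup\frac1N\log|P_N(z)|\le U(z)\) a.s. Together with the trivial bound \(|P_N(z)|\le\Per(|z|I+|H_N|)\) and \(\|H_N\|\le3\) eventually, this also yields the uniform \(L^1_{\mathrm{loc}}\) control.

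\textbf{Step 3: the lower bound, the main obstacle.} We must exclude \(|P_N(z)|\) being exponentially smaller than \(|\E P_N(z)|\). A second-moment (Paley--Zygmund) bound only gives this with non-summable probability. The first thing to try is therefore to show \(\E|P_N(z)|^2\sim C(z)\,|\E P_N(z)|^2\) with an explicit finite constant for \(z\) off the axis, which gives a positive-probability lower bound. This would be upgraded to an almost-sure statement via Gaussian concentration: the function \(\frac1N\log|P_N(z)|\), suitably truncated, is Lipschitz in \(H_N\) on the high-probability event \(\{\|H_N\|\le3\}\) with constant \(O(N^{-1/2}\cdot\text{poly})\). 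An alternative route to the lower bound uses the plurisubharmonic ``lower envelope'' argument: a subsequential limit \(v\) of \(\frac1N\log|P_N|\) satisfies \(v\le U\) by Step 2. Moreover \(v=U\) at one point per component, and \(\Delta v\) is a probability measure. If \(v=U\) at a dense set of points far from the axis, then harmonicity of \(U\) there combined with the maximum principle forces \(v=U\) everywhere. We expect establishing this pointwise lower bound, uniformly enough to run Borel--Cantelli, to be the hardest part.
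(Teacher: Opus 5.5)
Your proposal does not close. Step~3 is left open, and the concentration route you sketch cannot supply it. Paley--Zygmund gives only non-summable probabilities. The claimed Lipschitz bound has no basis for permanents: the gradient of \(\log|\Per(zI-H)|\) in \(H\) is the matrix of permanental cofactors divided by \(\Per(zI-H)\). There is no analogue of the resolvent bound \(\|(zI-H)^{-1}\|\le|\Im z|^{-1}\) that drives concentration for \(\frac1N\log|\det(zI-H)|\).

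Step~2 is also only a plan. The bound \(\limsup_N\frac1N\log\E|P_N(z)|^2\le 2U(z)\) at a.e.\ \(z\in\C\) needs a genuine steepest-descent analysis of an oscillatory integral. Bounding the integrand of the two-point duality by its modulus already gives the rate \(x^2-1\) instead of \(2U(\ii x)=x^2/2-1\) at \(z=\ii x\), \(0<|x|<1\).

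Your local upper control is also wrong as stated. With entrywise moduli, \(\Per(|z|I+|H_N|)\) is a sum of \(N!\) nonnegative products of entries of size \(N^{-1/2}\). It is therefore superexponentially large and not controlled by \(\|H_N\|\). The correct tool is \(|\Per A|\le\|A\|^N\) in operator norm, via the symmetric tensor power. This gives \(\frac1N\log|P_N(z)|\le\log(|z|+3)\) once \(\|H_N\|\le 3\).

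The missing idea is that no lower bound is needed. The paper passes to the monic polynomial \(R_N(z)=\ii^{-N}P_N(\ii z)\) on the real line. Monicity alone gives \(\|R_N\|_N^2\ge h_N\) and \(\sup_{x\in\R}|R_N(x)|e^{-Nx^2/4}\ge t_N(w)\) with \(t_N(w)^{1/N}\to e^{-1/2}\). The Saff--Totik theorem on weighted asymptotically extremal polynomials then turns a matching upper bound into \(\nu_{R_N}\weak\mu_{\mathrm{sc}}\).

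The upper bound is an exact identity rather than an asymptotic analysis. Fyodorov's two-point duality at \((\ii x,-\ii x)\), integrated against \(e^{-Nx^2/2}\), becomes an explicit Gaussian integral with \(\E\|R_N\|_N^2=\kappa_{\beta,N}h_N\), where \(\kappa_{\beta,N}\in\{N+1,\binom{N+2}{2}\}\). Markov and Borel--Cantelli give \(\|R_N\|_N^2\le N^4h_N\) eventually. A Nikolskii inequality (Mehler's formula bounds the Christoffel--Darboux kernel by \(CN\)) then passes to the weighted sup norm at polynomial cost.

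Your Step~1 fits this exactly. \(\E R_N=\pi_{N,N}\) is the extremal Hermite polynomial, so \(\E\|R_N\|_N^2=h_N+\E\|R_N-\E R_N\|_N^2\), and only polynomial control of the fluctuation is required.

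The same principle also rescues your lower-envelope route without Step~3. By Jensen's formula, the mean of \(\frac1N\log|P_N|\) over \(|z|=r\) is at least \(\log r\), which is the mean of \(U\) for \(r\ge2\). So any subsequential \(L^1_{\mathrm{loc}}\) limit \(v\), which satisfies \(v\le U\) by Step~2, equals \(U\) on \(\{|z|>2\}\). The minimum principle for the nonnegative superharmonic function \(U-v\) on the connected set \(\C\setminus\ii[-2,2]\) then gives \(v=U\) off the segment. What remains in your framework is Step~2, which the paper's integrated identity avoids entirely.
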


\Cref{thm:main} determines the global limiting distribution of the
permanental roots, but it leaves several problems open.  Fyodorov's
original argument for the GUE was already based on an expected
logarithmic-potential approach.  He introduced
\[
  \Phi(x,y)
  :=
  \lim_{N\to\infty}\frac1N
  \E\log\bigl|P_N(x+\ii y)\bigr|^2
\]
and observed that the rotated semicircle law would follow if one could
justify the interchange
\cite[Eqs.~(1.17)--(1.20)]{Fyodorov2006}
\[
  \lim_{N\to\infty}\frac1N\E\log|P_N(z)|^2
  =
  \lim_{N\to\infty}\frac1N\log\E|P_N(z)|^2.
\]
 A natural strengthening is to ask whether the random logarithmic potentials
satisfy
\begin{equation}\label{eq:future-log-potential}
  \frac1N\log|P_N(z)|
  \to
  \int_{-2}^{2}\log|z-\ii x|\,\dd\mu_{\mathrm{sc}}(x)
\end{equation}
in probability or almost surely in
\(L^1_{\mathrm{loc}}(\C)\).   By the
Poincar\'e--Lelong identity,
\[
  \nu_{P_N}
  =
  \frac{1}{2\pi N}\Delta\log|P_N|
\]
in the sense of distributions. Thus convergence in \(L^1_{\mathrm{loc}}(\C)\)  would imply the corresponding convergence
of zero counting measures directly from \eqref{eq:future-log-potential}.

A related question concerns quantitative confinement of the roots to the
imaginary axis and to the limiting interval.  
It would therefore be interesting to estimate separately the transverse
displacement
\[
  \max_{1\le j\le N}|\Re\zeta_{j,N}|
\]
and the possible overshoot beyond the limiting edges,
\[
  \max_{1\le j\le N}
  \bigl(|\Im\zeta_{j,N}|-2\bigr)_+,
  \qquad t_+:=\max\{t,0\}.
\]

One may also investigate the local statistics of the permanental roots, in
analogy with the local eigenvalue theory of Gaussian random matrices.  After
localization near the imaginary axis and the appropriate microscopic
rescaling, it is natural to ask whether bulk correlation functions exhibit
sine-kernel universality.  Near the limiting edges \(\pm2\ii\), one may
similarly ask whether the rescaled edge point processes are governed by
Airy-type kernels.  As in the classical GOE and GUE theory, the precise
limiting structures may be Pfaffian in the orthogonal case and determinantal
in the unitary case.

Closely related to the edge point process is the fluctuation of the extremal
permanental roots.  In particular, one may ask whether
\[
  \max_{1\le j\le N}\Im\zeta_{j,N}
\]
has, after the appropriate centering and scaling, a Tracy--Widom-type limit,
with possibly different limiting distributions in the GOE and GUE cases.  We
refer to \cite{AndersonGuionnetZeitouni2010} for the classical sine-kernel
bulk limits, Airy-kernel edge limits, and Tracy--Widom laws for Gaussian
random matrices.

Finally, the proof given here uses exact identities that are special to
Gaussian ensembles.  Establishing the same global rotated semicircle law for
non-Gaussian real symmetric and complex Hermitian Wigner matrices would
constitute a natural universality problem.

\section{Permanent identities}

For \(S\subseteq\{1,\dots,N\}\), let \(A[S]\) denote the principal submatrix
whose rows and columns are indexed by \(S\), and put \(\Per A[\varnothing]=1\).

\begin{lemma}
For every complex \(N\times N\) matrix \(A\),
\begin{equation}\label{eq:principal-expansion}
  \Per(zI_N-A)
  =
  \sum_{S\subseteq\{1,\dots,N\}}
  (-1)^{|S|}\Per A[S]z^{N-|S|}.
\end{equation}
In particular, \(\Per(zI_N-A)\) is monic of degree \(N\).
\end{lemma}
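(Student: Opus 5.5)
We expand the permanent directly from the definition \eqref{eq:def-permanent} using multilinearity in the diagonal entries. For each \(\sigma\in S_N\), the \((j,\sigma(j))\) entry of \(zI_N-A\) is \(z\delta_{j,\sigma(j)}-a_{j,\sigma(j)}\). Hence
\[
  \Per(zI_N-A)=\sum_{\sigma\in S_N}\prod_{j=1}^N\bigl(z\delta_{j,\sigma(j)}-a_{j,\sigma(j)}\bigr).
\]
For fixed \(\sigma\), expand the product by choosing, for each \(j\), either the term \(z\delta_{j,\sigma(j)}\) or the term \(-a_{j,\sigma(j)}\). Let \(T\) be the set of indices where the \(z\)-term is chosen, and put \(S=\{1,\dots,N\}\setminus T\). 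The resulting monomial vanishes unless \(\sigma(j)=j\) for every \(j\in T\). In that case it equals \(z^{|T|}(-1)^{|S|}\prod_{j\in S}a_{j,\sigma(j)}\).

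Next we regroup by \(S\). A permutation \(\sigma\) fixing every point of \(T\) maps \(S\) bijectively onto itself. Conversely, every permutation \(\tau\) of \(S\) extends uniquely, by the identity on \(T\), to such a \(\sigma\). This gives a bijection between the pairs \((\sigma,T)\) contributing nonzero terms and the pairs \((S,\tau)\) with \(\tau\) a permutation of \(S\). Interchanging the order of summation, we obtain
\[
  \Per(zI_N-A)=\sum_{S\subseteq\{1,\dots,N\}}(-1)^{|S|}z^{N-|S|}\sum_{\tau\in\mathrm{Sym}(S)}\prod_{j\in S}a_{j,\tau(j)} .
\]
The inner sum is exactly \(\Per A[S]\), and the empty set contributes \(\Per A[\varnothing]=1\) by the stated convention. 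This proves \eqref{eq:principal-expansion}.

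For the final claim, note that the term \(S=\varnothing\) contributes \(z^N\), while every \(S\neq\varnothing\) contributes a polynomial of degree at most \(N-1\). Therefore \(\Per(zI_N-A)\) is monic of degree \(N\).

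The only step needing care is the bijection between pairs \((\sigma,T)\) and pairs \((S,\tau)\). Its content is the observation that off-diagonal \(z\)-terms vanish, and it must be checked that no term is lost or double-counted; the rest is bookkeeping.
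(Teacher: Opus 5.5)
Your proposal is correct and follows the paper's proof essentially verbatim: expand each term of the permanent, observe that choosing the $z$-factor at index $j$ forces $\sigma(j)=j$, and regroup by the set $S$ where matrix entries are chosen to get $(-1)^{|S|}\Per A[S]\,z^{N-|S|}$. You simply spell out the bijection between pairs $(\sigma,T)$ and pairs $(S,\tau)$, which the paper leaves implicit.
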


\begin{proof}
Expand the permanent using \eqref{eq:def-permanent}.  In a term indexed by
\(\sigma\in S_N\), choose the matrix entry rather than the factor \(z\)
exactly on a subset \(S\).  If \(j\notin S\), then necessarily
\(\sigma(j)=j\), so \(\sigma\) restricts to a permutation of \(S\).  Summing
over all restrictions gives \((-1)^{|S|}\Per A[S]\), while the indices
outside \(S\) contribute \(z^{N-|S|}\).
\end{proof}

\begin{lemma}\label{lem:real-coefficients}
If \(A\) is Hermitian, then \(\Per(zI_N-A)\) has real coefficients.
\end{lemma}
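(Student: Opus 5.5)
By \eqref{eq:principal-expansion}, the coefficient of \(z^{N-k}\) in \(\Per(zI_N-A)\) is \((-1)^{k}\sum_{|S|=k}\Per A[S]\). Every principal submatrix \(A[S]\) of a Hermitian matrix is again Hermitian. It therefore suffices to show that \(\Per B\) is real for every Hermitian \(m\times m\) matrix \(B\).

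For this, we compare \(\Per B\) with its complex conjugate. Since \(\overline{b_{jk}}=b_{kj}\), we have
\[
  \overline{\Per B}
  =\sum_{\sigma\in S_m}\prod_{j=1}^m \overline{b_{j,\sigma(j)}}
  =\sum_{\sigma\in S_m}\prod_{j=1}^m b_{\sigma(j),j}.
\]
In the product \(\prod_j b_{\sigma(j),j}\), substitute \(i=\sigma(j)\). The product becomes \(\prod_{i=1}^m b_{i,\sigma^{-1}(i)}\). As \(\sigma\) runs over \(S_m\), so does \(\sigma^{-1}\), and hence
\[
  \overline{\Per B}=\sum_{\tau\in S_m}\prod_{i=1}^m b_{i,\tau(i)}=\Per B^{\mathsf T}=\Per B.
\]
Thus \(\Per B\in\R\). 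Consequently every coefficient of \(\Per(zI_N-A)\) is a real signed sum of real numbers, and the polynomial has real coefficients.

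There is no real obstacle here. The only point requiring care is the reindexing \(\sigma\mapsto\sigma^{-1}\), which shows that the permanent is invariant under transposition. Unlike the determinant case, this step needs no sign bookkeeping.
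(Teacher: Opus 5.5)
Your proof is correct and follows essentially the same approach as the paper. You reduce via \eqref{eq:principal-expansion} to showing that the permanent of a Hermitian principal submatrix is real, and you get this from \(\overline{\Per B}=\Per B^{\mathsf T}=\Per B\). The only difference is that you prove transpose invariance explicitly by reindexing \(\sigma\mapsto\sigma^{-1}\), whereas the paper simply cites it.
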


\begin{proof}
Every principal submatrix \(A[S]\) is Hermitian.  Since the permanent is
invariant under transpose,
\[
  \overline{\Per A[S]}
  =
  \Per\overline{A[S]}
  =
  \Per A[S]^{\mathsf T}
  =
  \Per A[S].
\]
Now apply \eqref{eq:principal-expansion}.
\end{proof}

For
\[
  q=
  \begin{pmatrix}
    q_{11}&q_{12}\\
    \overline q_{12}&q_{22}
  \end{pmatrix}
  \in\Herm_2(\C),
\]
where \(q_{11},q_{22}\in\R\) and \(q_{12}\in\C\), set
\[
  \dd q
  :=
  \dd q_{11}\,\dd q_{22}\,
  \dd(\Re q_{12})\,\dd(\Im q_{12})
\]
and define
\begin{equation}\label{eq:aux-q-law}
  \dd\gamma_{q,N}(q)
  :=
  \frac{N^2}{2\pi^2}
  \exp\!\left(-\frac N2\Tr q^2\right)\dd q.
\end{equation}
For \(\eta\in\C\), set
\[
  \dd^2\eta:=\dd(\Re\eta)\,\dd(\Im\eta)
\]
and define
\begin{equation}\label{eq:eta-law}
  \dd\gamma_{\eta,N}(\eta)
  :=
  \frac N\pi e^{-N|\eta|^2}\dd^2\eta.
\end{equation}
Since
\[
  \Tr q^2=q_{11}^2+q_{22}^2+2|q_{12}|^2,
\]
elementary Gaussian integration shows that \(\gamma_{q,N}\) and
\(\gamma_{\eta,N}\) are probability measures.

Let
\[
  M:=\diag(\mu_1,\mu_2).
\]
The following identity is the \(n=2\) case of Fyodorov's general GUE
\(n\)-point duality \cite[Theorem~1.2 and Eq.~(4.17)]{Fyodorov2006}.

\begin{proposition} 
Let \(\mu_1,\mu_2\in\C\).  If \(H_N\sim\Pp_{2,N}\), then
\begin{equation}\label{eq:GUE-per-duality}
  \E[P_N(\mu_1)P_N(\mu_2)]
  =
  \int_{\Herm_2(\C)}
  [\Per(M-q)]^N\,\dd\gamma_{q,N}(q).
\end{equation}
\end{proposition}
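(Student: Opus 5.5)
The plan is to write both permanents as Gaussian (bosonic) integrals over auxiliary complex vectors, average over \(H_N\) exactly, and then decouple the resulting quartic interaction by a Hubbard--Stratonovich transformation with a \(2\times2\) Hermitian matrix \(q\). The starting point is the Wick-type representation, valid for every complex \(N\times N\) matrix \(B\),
\[
  \Per B
  =
  \int_{\C^N}\prod_{j=1}^N \overline{z_j}\,(Bz)_j\;
  \frac{e^{-|z|^2}}{\pi^N}\prod_j\dd^2 z_j .
\]
It follows from Wick's theorem: \(\E[\overline{z_j}z_k]=\delta_{jk}\), and the pairings of \(\{\overline{z_j}\}\) with \(\{z_{\sigma(j)}\}\) are indexed by \(\sigma\in S_N\). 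We apply this with \(B=\mu_1-H\) and an independent copy with vector \(w\) and \(B=\mu_2-H\).

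Because the integrand is polynomial rather than exponential in \(H\), we do not average directly. Instead we linearize: for each \(j\), \(\overline{z_j}(Bz)_j\) is extracted as a coefficient. Concretely, we insert phases \(z_j\mapsto e^{\ii\theta_j}z_j\) and use \(\prod_j x_j=\) coefficient of \(\prod_j t_j\) in \(\exp(\sum_j t_j x_j)\). This turns the product into \(\exp\bigl(-\sum_j t_j\overline{z_j}(Hz)_j-\sum_j s_j\overline{w_j}(Hw)_j\bigr)\) times \(H\)-independent factors, and that exponent is linear in \(H\), namely \(-\Tr H X\) with \(X=\sum_j(t_j z\overline z^{\mathsf T}e_je_j^{\mathsf T}+\dots)\) suitably Hermitized. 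The GUE average of \(e^{-\Tr HX}\) is the explicit Gaussian \(\exp(\Tr X^2/(2N))\), since the covariance is \(\E[H_{ab}H_{cd}]=\delta_{ad}\delta_{bc}/N\).

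The next step is to rewrite \(\Tr X^2\) in terms of the \(2\times2\) Gram-type matrix \(Q\) built from the rows \((z_j,w_j)\), weighted by \(t_j,s_j\). We then apply the Hubbard--Stratonovich identity
\[
  e^{\Tr Q^2/(2N)}=\int_{\Herm_2(\C)}e^{\Tr qQ}\,\dd\gamma_{q,N}(q),
\]
which holds because \(\gamma_{q,N}\) is exactly the Gaussian with covariance dual to \(\tfrac N2\Tr q^2\). After exchanging integrals, the \(z,w\) integrand factorizes over \(j\). Each factor is a two-component integral that, by the same Wick argument in reverse, produces one term of a \(2\times2\) permanent structure. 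Taking the coefficient of \(\prod_j t_js_j\), the result should reassemble into \(\bigl[(\mu_1-q_{11})(\mu_2-q_{22})+|q_{12}|^2\bigr]^N=[\Per(M-q)]^N\).

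I expect the main obstacle to be the bookkeeping in the middle step: arranging the linearization so that the \(H\)-dependence really becomes \(\Tr HX\) with \(X\) of rank-two form in the \((z,w)\) space, so that \(\Tr X^2\) depends only on a \(2\times 2\) matrix, and checking that the sign conventions produce \(+|q_{12}|^2\) (a permanent) rather than a determinant. If the coefficient-extraction route becomes unwieldy, I would instead fall back on a direct combinatorial check. Both sides of the identity are polynomials in \(\mu_1,\mu_2\), so it suffices to compare coefficients. For this I would expand both sides through \eqref{eq:principal-expansion}, apply Wick's formula to GUE moments of \(\Per H[S]\Per H[T]\) on the left and to \(\gamma_{q,N}\)-moments on the right, and match the two pairing sums. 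That route is mechanical, but the matching of pairings is exactly where the \(n=2\) structure of \cite{Fyodorov2006} is used.
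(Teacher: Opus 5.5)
The paper gives no proof of this proposition: it imports it as the \(n=2\) case of Fyodorov's duality \cite[Theorem~1.2 and Eq.~(4.17)]{Fyodorov2006}. You instead derive it directly, and your outline does close.

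Write \(T=\diag(t)\), \(S=\diag(s)\), \(X=zz^{*}T\) and \(Y=ww^{*}S\). Then \(\E\,e^{-\Tr H(X+Y)}=e^{\Tr Q^2/(2N)}\) holds for arbitrary complex \(X+Y\), where \(Q_{11}=z^*Tz\), \(Q_{12}=z^*Tw\), \(Q_{21}=w^*Sz\) and \(Q_{22}=w^*Sw\). The Hubbard--Stratonovich step is exact, because \(\gamma_{q,N}\) is the \(2\times2\) GUE with \(\E[q_{ab}q_{cd}]=\delta_{ad}\delta_{bc}/N\). The resulting exponent splits over \(j\) as \(t_j\bar z_j((M+q^{\mathsf T})\phi_j)_1+s_j\bar w_j((M+q^{\mathsf T})\phi_j)_2\), with \(\phi_j=(z_j,w_j)^{\mathsf T}\).

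So the \(t_js_j\)-coefficient of each factor integrates, by your Wick formula with \(N=2\), to the full permanent \(\Per(M+q^{\mathsf T})=\Per(M+q)\), not ``one term''. The reflection \(q\mapsto-q\), which preserves \(\gamma_{q,N}\), then gives \([\Per(M-q)]^N\), with \(q_{12}q_{21}=|q_{12}|^2\). Your route is self-contained and shows why a permanent with \(+|q_{12}|^2\) appears: bosonic pairings carry no signs. The citation buys brevity and the general \(n\)-point statement.

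Two points must be repaired when you write it out.

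First, drop the ``Hermitization''. \(\Tr HX\) is not real, so replacing \(X\) by its Hermitian part would replace \(\Tr HX\) by its real part. The Gaussian formula needs no such step.

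Second, none of the interchanges of integrals can be carried out with actual numbers \(t,s\).
\begin{enumerate}
\item If you do the \(z,w\)-integral last: for \(t_j=s_j=\epsilon>0\) one has \(\Tr Q^2=\epsilon^2(|z|^4+2|z^*w|^2+|w|^4)\). After the \(H\)-average the \(z,w\)-integrand therefore grows like \(e^{\epsilon^2(|z|^4+|w|^4)/(2N)}\), and the integral diverges.
\item If you integrate out \(z,w\) at fixed \(q\) instead: this converges only for bounded \(q\). Its formal value \(\prod_j\det(I-D_j(M+q^{\mathsf T}))^{-1}\), with \(D_j=\diag(t_j,s_j)\), has poles, so the \(q\)-integral cannot be taken afterwards.
\end{enumerate}
The fix is to read every identity as one between formal power series in \((t,s)\), and to extract the coefficient of \(\prod_j t_js_j\) before integrating over \(z,w\). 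If you realize the extraction by phase integrals, they must sit inside the \(z,w\)-integral. Each coefficient is then a polynomial in \((z,w,H,q)\) against Gaussian weights, so every Fubini step is legitimate.
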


Fyodorov also obtained the corresponding GOE two-point duality
\cite[Eqs.~(7.7)--(7.9)]{Fyodorov2006}.   
\begin{proposition} 
Let \(\mu_1,\mu_2\in\C\).  If \(H_N\sim\Pp_{1,N}\), then
\begin{equation}\label{eq:GOE-per-duality}
  \E[P_N(\mu_1)P_N(\mu_2)]
  =
  \int_{\Herm_2(\C)}\int_{\C}
  \bigl(\Per(M-q)+|\eta|^2\bigr)^N
  \,\dd\gamma_{\eta,N}(\eta)\,\dd\gamma_{q,N}(q).
\end{equation}
\end{proposition}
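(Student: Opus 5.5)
The plan is to rederive Fyodorov's two-point GOE duality from scratch by three moves: write each permanent as a Gaussian moment, average over \(H_N\) exactly, and decouple the resulting quartic terms by Hubbard--Stratonovich, after which the integrand factorizes site by site. The starting point is Wick's theorem in the form
\[
  \Per A=\E_z\Bigl[\prod_{j=1}^N\overline z_j\,(Az)_j\Bigr],
\]
where \(z_1,\dots,z_N\) are i.i.d. standard complex Gaussians. Expanding \(\prod_j\overline z_j\sum_k a_{jk}z_k\), the moment \(\E\prod_j\overline z_j z_{k_j}\) equals \(1\) when \(j\mapsto k_j\) is a permutation and \(0\) otherwise.

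Taking independent vectors \(z,w\), I write
\[
  P_N(\mu_1)P_N(\mu_2)=\E_{z,w}\prod_j \overline z_j\bigl((\mu_1-H_N)z\bigr)_j\,\overline w_j\bigl((\mu_2-H_N)w\bigr)_j .
\]
To make the \(H_N\)-dependence exponential, I introduce sources \(s_j,t_j\). The product equals \(\prod_j\partial_{s_j}\partial_{t_j}\) at \(s=t=0\) of
\[
  \exp\Bigl(\sum_j s_j\overline z_j((\mu_1-H)z)_j+t_j\overline w_j((\mu_2-H)w)_j\Bigr).
\]
The exponent is linear in \(H\), namely \(-\Tr(HX)\) with \(X=z\,(S\overline z)^{\mathsf T}+w\,(T\overline w)^{\mathsf T}\), where \(S=\diag(s)\) and \(T=\diag(t)\). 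Since everything is a polynomial in the Gaussian variables, Fubini applies freely. The GOE average then gives
\[
  \E e^{-\Tr HX}=\exp\Bigl(\tfrac1{4N}\Tr\bigl(X+X^{\mathsf T}\bigr)^2\Bigr).
\]
The GUE would instead give \(\exp\bigl(\tfrac1{2N}\Tr X^2\bigr)\), and the symmetrization \(X+X^{\mathsf T}\) is what produces the extra GOE term.

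Next I expand \(\Tr(X+X^{\mathsf T})^2\) into bilinears in site sums. The \(\Tr X^2\) part is a sum of products such as \(\bigl(\sum_j s_j\overline z_jz_j\bigr)^2\) and \(\bigl(\sum_j s_j\overline z_jw_j\bigr)\bigl(\sum_k t_k\overline w_kz_k\bigr)\), i.e. the entries of the \(2\times2\) matrix \(Q=\sum_j \Lambda_j u_ju_j^*\) with \(u_j=(z_j,w_j)\) and \(\Lambda_j=\diag(s_j,t_j)\). These are linearized with a \(2\times2\) Hermitian \(q\) distributed as \(\gamma_{q,N}\), using \(e^{\frac1{2N}\Tr Q^2}=\int e^{-\Tr qQ}\dd\gamma_{q,N}(q)\) (up to the precise sign and scaling convention, which I will fix by comparing second moments). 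The \(\Tr XX^{\mathsf T}\) part produces the product of \(\sum_j(s_j\overline z_j\overline w_j\cdot\ldots)\) with its transposed partner. This is linearized by the complex variable \(\eta\sim\gamma_{\eta,N}\), via \(e^{ab/N}=\int e^{\eta a+\overline\eta b}\dd\gamma_{\eta,N}(\eta)\).

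After these two linearizations the exponent is a sum over sites \(j\) of terms involving only \((s_j,t_j,z_j,w_j)\). The expectation and the derivatives therefore factorize into \(\prod_j F(q,\eta)\), with
\[
  F=\partial_s\partial_t\big|_0\E_{z,w}\exp(\text{single-site exponent}).
\]
This is a two-variable Gaussian moment. Its \(\mu\)- and \(q\)-dependent part reproduces \(\Per(M-q)=(\mu_1-q_{11})(\mu_2-q_{22})+|q_{12}|^2\), which is already the \(n=2\) mechanism behind \eqref{eq:GUE-per-duality}. The \(\eta\)-terms contribute the extra \(|\eta|^2\). Integrating \(F^N\) against \(\gamma_{\eta,N}\otimes\gamma_{q,N}\) then gives \eqref{eq:GOE-per-duality}.

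I expect the main obstacle to be bookkeeping rather than conceptual. First, the constants \(1/N\) versus \(2/N\) on the diagonal versus off-diagonal GOE entries must match exactly the normalizations in \eqref{eq:aux-q-law} and \eqref{eq:eta-law}. Second, the quadratic source terms \(s_j^2\) and \(s_jt_j\) on the diagonal, which can contribute under \(\partial_{s_j}\partial_{t_j}\), must be shown either to cancel or to be absorbed correctly. Third, the single-site Gaussian moment must come out as exactly \(\Per(M-q)+|\eta|^2\) with the right sign. As a sanity check, I would first verify the result at \(N=1\) by direct computation, where \(\E(\mu_1-h)(\mu_2-h)=\mu_1\mu_2+2\).
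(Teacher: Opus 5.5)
The paper does not derive this identity. It imports it from Fyodorov's Eqs.~(7.7)--(7.9) and uses it as a black box. You instead reconstruct it from scratch: bosonic Wick representation of \(\Per\), exact GOE average, Hubbard--Stratonovich, and site-by-site factorization. The algebra closes with exactly the paper's normalizations. One has \(\frac1{4N}\Tr(X+X^{\mathsf T})^2=\frac1{2N}\Tr X^2+\frac1{2N}\Tr XX^{\mathsf T}\). Note that \(\Tr XX^{\mathsf T}\) is not a single product but
\[
  \Bigl(\sum_j s_j^2\overline{z}_j^{\,2}\Bigr)\Bigl(\sum_k z_k^2\Bigr)
  +2\Bigl(\sum_j s_jt_j\overline{z}_j\overline{w}_j\Bigr)\Bigl(\sum_k z_kw_k\Bigr)
  +\Bigl(\sum_j t_j^2\overline{w}_j^{\,2}\Bigr)\Bigl(\sum_k w_k^2\Bigr).
\]
The outer two terms lie in the ideal generated by the \(s_j^2,t_j^2\) and are killed by \(\prod_j\partial_{s_j}\partial_{t_j}|_{0}\). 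This leaves \(\frac1N AB\), with \(A=\sum_j s_jt_j\overline{z}_j\overline{w}_j\) and \(B=\sum_k z_kw_k\), which is exactly what \(\gamma_{\eta,N}\) (with \(\E|\eta|^2=1/N\)) linearizes.

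The diagonal \(s_jt_j\) terms you worried about need no separate treatment: the Hubbard--Stratonovich identities are exact and generate them automatically. There is one harmless slip: the matrix whose entries reproduce your cross term is \(Q=\sum_j\Lambda_j\overline{u_j}\,u_j^{\mathsf T}\), not \(\sum_j\Lambda_ju_ju_j^*\). With \(\int e^{-\Tr(qQ)}\,\dd\gamma_{q,N}(q)=e^{\frac1{2N}\Tr Q^2}\), the coefficient of \(st\) in the single-site exponential is
\[
  \Bigl(\bigl[(\mu_1-q_{11})|z|^2-q_{21}\overline{z} w\bigr]\bigl[(\mu_2-q_{22})|w|^2-q_{12}\overline{w} z\bigr]+\eta\,\overline{z}\,\overline{w}\Bigr)e^{\overline{\eta} zw}.
\]
Its Gaussian expectation is \((\mu_1-q_{11})(\mu_2-q_{22})+|q_{12}|^2+|\eta|^2=\Per(M-q)+|\eta|^2\), as required, and your \(N=1\) check \(\mu_1\mu_2+2\) agrees. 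Compared with the citation, your version directly verifies that \(\gamma_{q,N}\) and \(\gamma_{\eta,N}\) are the correct measures for the paper's GOE convention (diagonal variance \(2/N\)). The citation is of course shorter.

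The one claim that is false as stated is ``Fubini applies freely''. With numerical sources, the \((z,w)\)-integrand \(e^{\frac1{4N}\Tr(X+X^{\mathsf T})^2}\) contains \(\frac1{2N}\bigl(\sum_j s_j|z_j|^2\bigr)^2\) and is not integrable. After linearization, the source-free factor \(e^{\overline{\eta}\sum_k z_kw_k}\) is not integrable against \(e^{-|z|^2-|w|^2}\) once \(|\eta|\ge2\). So computing \(F\) by a \((z,w)\)-integral at fixed \(\eta\) and then integrating \(F^N\) over \(\eta\) is an unjustified interchange.

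The repair is short. Regard every source exponential as a formal power series in \((s,t)\) and extract the coefficient of \(\prod_j s_jt_j\). Then the GOE average and both Hubbard--Stratonovich steps become coefficientwise Gaussian-moment identities for polynomials. Since \(\overline{z}_j\) and \(\overline{w}_j\) enter only through \(s_j\overline{z}_j\) and \(t_j\overline{w}_j\), each has degree exactly one in that coefficient. By Wick's rule, only the part of \(e^{\overline{\eta} z_jw_j}\) of degree at most one in \(z_jw_j\) can contribute, so you may replace it by \(1+\overline{\eta} z_jw_j\) at every site. After that all integrands are polynomials, Fubini is legitimate, the site factorization holds, and the stated identity follows.
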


We next specialize these formulas to \(\mu_1=\ii x\) and
\(\mu_2=-\ii x\), and rewrite the resulting expressions as two-dimensional
scalar Gaussian integrals.

For \(\alpha>-1\), define
\[
  \dd\Gamma_{\alpha,N}(y_1,y_2)
  :=
  \frac{1}{Z_{\alpha,N}}
  |y_1-y_2|^\alpha
  \exp\!\left[-\frac N2(y_1^2+y_2^2)\right]
  \dd y_1\,\dd y_2,
\]
where
\[
  Z_{\alpha,N}
  :=
  \int_{\R^2}|y_1-y_2|^\alpha
  \exp\!\left[-\frac N2(y_1^2+y_2^2)\right]
  \dd y_1\,\dd y_2.
\]

\begin{lemma}\label{lem:Z-alpha}
For every \(\alpha>-1\),
\[
  Z_{\alpha,N}
  =
  2^{\alpha+1}\sqrt\pi\,
  \Gamma\!\left(\frac{\alpha+1}{2}\right)
  N^{-(\alpha+2)/2}.
\]
In particular,
\[
  Z_{2,N}=\frac{4\pi}{N^2},
  \qquad
  Z_{4,N}=\frac{24\pi}{N^3}.
\]
\end{lemma}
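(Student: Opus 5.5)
The plan is to change variables to the center of mass and the difference, which decouples the integral into two one-dimensional Gaussian-type integrals. Set
\[
  u=\frac{y_1+y_2}{\sqrt2},\qquad v=\frac{y_1-y_2}{\sqrt2}.
\]
This is an orthogonal transformation, so \(\dd y_1\,\dd y_2=\dd u\,\dd v\) and \(y_1^2+y_2^2=u^2+v^2\). Moreover \(|y_1-y_2|^\alpha=2^{\alpha/2}|v|^\alpha\). By Tonelli (the integrand is nonnegative), we get
\[
  Z_{\alpha,N}
  =
  2^{\alpha/2}
  \int_{\R}e^{-Nu^2/2}\,\dd u
  \int_{\R}|v|^\alpha e^{-Nv^2/2}\,\dd v .
\]

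The first factor is the Gaussian integral \(\sqrt{2\pi/N}\). For the second factor, use evenness and then substitute \(t=Nv^2/2\), so that \(v=(2t/N)^{1/2}\) and \(\dd v=(2N t)^{-1/2}\dd t\). This gives
\[
  \int_{\R}|v|^\alpha e^{-Nv^2/2}\,\dd v
  =
  2\int_0^\infty (2t/N)^{\alpha/2}e^{-t}(2Nt)^{-1/2}\,\dd t
  =
  2^{(\alpha+1)/2}N^{-(\alpha+1)/2}\,
  \Gamma\!\left(\frac{\alpha+1}{2}\right).
\]
Here the hypothesis \(\alpha>-1\) is exactly what makes the integral converge at \(t=0\). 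Convergence at infinity is automatic.

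Multiplying the three factors, the powers of two combine as \(2^{\alpha/2}\cdot 2^{1/2}\cdot 2^{(\alpha+1)/2}=2^{\alpha+1}\). The powers of \(N\) combine as \(N^{-1/2}N^{-(\alpha+1)/2}=N^{-(\alpha+2)/2}\). What remains is \(\sqrt\pi\,\Gamma((\alpha+1)/2)\), which gives the stated formula.

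For the special cases, use \(\Gamma(3/2)=\sqrt\pi/2\), which gives \(Z_{2,N}=8\sqrt\pi\cdot(\sqrt\pi/2)N^{-2}=4\pi/N^2\). Similarly, \(\Gamma(5/2)=3\sqrt\pi/4\) gives \(Z_{4,N}=32\sqrt\pi\cdot(3\sqrt\pi/4)N^{-3}=24\pi/N^3\).

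There is no real obstacle. The only care needed is tracking the powers of \(2\) and \(N\) through the substitution.
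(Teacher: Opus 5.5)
Your proof is correct and follows essentially the same route as the paper: both decouple the integral via sum/difference coordinates and then evaluate a Gaussian integral and a Gamma-function integral. The paper uses the unnormalized coordinates \(s=(y_1+y_2)/2\), \(r=(y_1-y_2)/2\) with Jacobian \(2\) rather than your orthogonal normalization, and this only changes how the powers of \(2\) are distributed.
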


\begin{proof}
Set
\[
  s=\frac{y_1+y_2}{2},
  \qquad
  r=\frac{y_1-y_2}{2}.
\]
Then
\[
  \dd y_1\,\dd y_2=2\,\dd s\,\dd r,
  \qquad
  |y_1-y_2|^\alpha=2^\alpha|r|^\alpha,
  \qquad
  \frac12(y_1^2+y_2^2)=s^2+r^2.
\]
It follows that
\begin{align*}
  Z_{\alpha,N}
  &=
  2^{\alpha+1}
  \left(\int_{\R}e^{-Ns^2}\,\dd s\right)
  \left(\int_{\R}|r|^\alpha e^{-Nr^2}\,\dd r\right)\\
  &=
  2^{\alpha+1}\sqrt\pi\,
  \Gamma\!\left(\frac{\alpha+1}{2}\right)
  N^{-(\alpha+2)/2}.
\end{align*}
The special cases follow from
\(
  \Gamma(3/2)=\sqrt\pi/2
\)
and
\(
  \Gamma(5/2)=3\sqrt\pi/4.
\)
\end{proof}

\begin{proposition}\label{prop:scalar-permanent-duality}
For every \(x\in\R\),
\begin{align}
  H_N\sim\Pp_{2,N}:\qquad
  &\E[P_N(\ii x)P_N(-\ii x)]
  =
  \int_{\R^2}
  \prod_{\ell=1}^2(x-\ii y_\ell)^N
  \,\dd\Gamma_{2,N}(y_1,y_2),
  \label{eq:GUE-per-scalar}\\
  H_N\sim\Pp_{1,N}:\qquad
  &\E[P_N(\ii x)P_N(-\ii x)]
  =
  \int_{\R^2}
  \prod_{\ell=1}^2(x-\ii y_\ell)^N
  \,\dd\Gamma_{4,N}(y_1,y_2).
  \label{eq:GOE-per-scalar}
\end{align}
The integrals are absolutely convergent.
\end{proposition}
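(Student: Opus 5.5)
Starting from the matrix dualities \eqref{eq:GUE-per-duality} and \eqref{eq:GOE-per-duality}, we set \(\mu_1=\ii x\) and \(\mu_2=-\ii x\). Then
\[
  \Per(M-q)=(\ii x-q_{11})(-\ii x-q_{22})+|q_{12}|^2 .
\]
The natural move is to diagonalize \(q\). Write \(q=U\diag(y_1,y_2)U^*\) with \(U\in U(2)\). Under this change of variables \(\dd q\) becomes \(c\,(y_1-y_2)^2\,\dd y_1\dd y_2\,\dd U\), and \(\Tr q^2=y_1^2+y_2^2\). The obstacle is that \(\Per(M-q)\) is not unitarily invariant, so the integrand depends on \(U\). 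My plan is to handle this by exploiting the polynomial structure in \(q\) rather than by integrating over \(U\) directly.

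\textbf{Step 1: the GUE case.} Expand
\[
  [\Per(M-q)]^N=\sum_{k}\binom Nk |q_{12}|^{2k}\,\bigl[(\ii x-q_{11})(-\ii x-q_{22})\bigr]^{N-k}.
\]
Under \(\gamma_{q,N}\) the variables \(q_{11},q_{22},q_{12}\) are independent Gaussians. Integrating out \(q_{12}\) gives \(\E|q_{12}|^{2k}=k!\,N^{-k}\). This yields a finite sum of products of one-dimensional Gaussian moments:
\[
  \sum_k \binom Nk \frac{k!}{N^k}\,
  \E\bigl[(\ii x-q_{11})^{N-k}\bigr]\,\E\bigl[(-\ii x-q_{22})^{N-k}\bigr],
\]
with \(q_{jj}\sim N(0,1/N)\). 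For the right-hand side of \eqref{eq:GUE-per-scalar}, expand \((y_1-y_2)^2=y_1^2-2y_1y_2+y_2^2\) and use \(Z_{2,N}=4\pi/N^2\) from \cref{lem:Z-alpha}. The right-hand side then reduces to sums of products of moments \(\E[y^j(x-\ii y)^N]\) with \(y\sim N(0,1/N)\), which we evaluate via Hermite-type identities. Concretely, \(\E[(x-\ii y)^m]\) is a rescaled Hermite polynomial \(h_m(x)\), and \(\E[y(x-\ii y)^N]\) and \(\E[y^2(x-\ii y)^N]\) follow by Gaussian integration by parts. The matching then becomes an identity among Hermite polynomials, namely a Christoffel--Darboux-type sum
\[
  \sum_k \frac{N!}{(N-k)!\,N^k}\,h_{N-k}(x)\,h_{N-k}(-x)\;\text{(with signs)} \;=\; \text{a Wronskian-type expression in } h_N,h_{N\pm1}.
\]
This is exactly the Christoffel--Darboux formula at coincident points, so I expect it to close.

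\textbf{Alternative route (cleaner, and the one I would try first).} Apply the same moment strategy but avoid expanding \(|q_{12}|^{2k}\). Write \(|q_{12}|^2=-\det q+q_{11}q_{22}\), so that
\[
  \Per(M-q)=x^2-\ii x(q_{22}-q_{11})+2q_{11}q_{22}-\det q .
\]
This is still not spectral. Instead, I would note that both sides are polynomials in \(x\) and compare them via a generating function. Multiply by \(t^N/N!\) after rescaling, or equivalently represent \(a^N=N!\,\frac{1}{2\pi\ii}\oint e^{sa}s^{-N-1}\dd s\). The Gaussian integral of \(e^{s\Per(M-q)}\) over \(q\) can be computed in closed form, since the exponent is quadratic in \(q\), giving a Gaussian determinant. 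The Gaussian integral of \(e^{s(x-\ii y_1)}\)-type expressions on the scalar side is similarly explicit. The coefficients in \(s\) must then agree. The main obstacle is that \(\prod_\ell(x-\ii y_\ell)^N\) is not the \(N\)-th power of a single quantity, so the generating function has to be two-variable. I would therefore expect the Christoffel--Darboux comparison above to be the practical route.

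\textbf{Step 2: the GOE case.} First integrate out \(\eta\): under \(\gamma_{\eta,N}\), \(|\eta|^2\) is exponential with mean \(1/N\). This gives
\[
  \sum_k\binom Nk \frac{k!}{N^k}\,\E[\Per(M-q)^{N-k}],
\]
and each term is handled by Step 1 with \(N\) replaced by \(N-k\), keeping the weight \(N\). The target has the weight \(|y_1-y_2|^4\) with \(Z_{4,N}=24\pi/N^3\), and we reduce it again to Hermite sums. Absolute convergence is immediate in both cases from polynomial growth against Gaussian weights.
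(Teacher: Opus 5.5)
Your GUE half is sound and does close. The left side of \eqref{eq:GUE-per-scalar} becomes $\sum_k\frac{N!}{(N-k)!N^k}\pi_{N-k,N}(x)^2=h_N\sum_{i=0}^N\pi_{i,N}(x)^2/h_{i,N}$. The $\Gamma_{2,N}$ side becomes $\pi_{N,N}^2+N\pi_{N-1,N}^2-(N-1)\pi_{N,N}\pi_{N-2,N}$. The recurrence $\pi_{n+1,N}=x\pi_{n,N}-\frac nN\pi_{n-1,N}$ turns this into $(N+1)\pi_{N,N}^2-N\pi_{N-1,N}\pi_{N+1,N}$, which is the confluent Christoffel--Darboux formula.

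The gap is in the GOE half. Integrating out $\eta$ and applying Step~1 termwise gives
\[
  \sum_{k=0}^N\frac{N!}{(N-k)!\,N^k}\int_{\R^2}\prod_{\ell=1}^2(x-\ii y_\ell)^{N-k}\,\dd\Gamma_{2,N}(y_1,y_2)
  =h_N\sum_{m=0}^N\sum_{i=0}^m\frac{\pi_{i,N}(x)^2}{h_{i,N}} .
\]
This is a sum of $\Gamma_{2,N}$-integrals of lower degree. The target is a single $\Gamma_{4,N}$-integral of degree $N$, namely $\frac{N^2}{12}\bigl(2a_0a_4-8a_1a_3+6a_2^2\bigr)$ with $a_j=\E[y^j(x-\ii y)^N]$ and $y\sim N(0,1/N)$. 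Equating the two requires a closed form for the summed kernels $\sum_{m\le N}\sum_{i\le m}\pi_{i,N}^2/h_{i,N}$. That does not follow from one application of Christoffel--Darboux, and you neither state nor prove it. So ``reduce it again to Hermite sums'' skips exactly the content of \eqref{eq:GOE-per-scalar}: the passage from exponent $\alpha=2$ to $\alpha=4$.

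The missing idea is that the obstacle you name is not real. Let $q'$ be $q$ with $q_{11}$ replaced by $-q_{11}$; this reflection preserves $\gamma_{q,N}$. Then $\Per(M-q)=x^2-\ii x\Tr q'-\det q'=\det(xI_2-\ii q')$, which is unitarily invariant in $q'$. Write the eigenvalues of $q'$ as $s\pm r$, with $s=\frac12\Tr q'$ and $r$ the Euclidean norm of $(v,\Re q_{12},\Im q_{12})\in\R^3$, where $v=\frac12(q'_{11}-q_{22})$. Then the integrand is $(x-\ii(s+r))(x-\ii(s-r))$, and polar coordinates in $\R^3$ give the weight $r^2\propto(y_1-y_2)^2$. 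For the GOE, $\Per(M-q)+|\eta|^2$ has the same form, with $r$ now the norm of $(v,\Re q_{12},\Im q_{12},\Re\eta,\Im\eta)\in\R^5$. The Jacobian $r^4$ then yields $|y_1-y_2|^4$ directly.

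This is the paper's argument. It needs no orthogonal-polynomial identities and treats both ensembles uniformly. Your $\eta$-expansion could be rescued in these $(s,r)$ variables, where it reduces to the statement that a radial integral over $\R^5$ factors as one over $\R^3\times\R^2$. But that is the same radial observation.
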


\begin{proof}
We first consider the GUE.  In \eqref{eq:GUE-per-duality}, take
\[
  M=\diag(\ii x,-\ii x).
\]
Make the change of variable \(q_{11}\mapsto-q_{11}\), which preserves the
measure \(\gamma_{q,N}\), and then relabel the reflected coordinate as
\(q_{11}\).  A direct calculation gives
\[
  \Per(M-q)
  =
  x^2-\ii x(q_{11}+q_{22})-q_{11}q_{22}+|q_{12}|^2.
\]
Write
\[
  s=\frac{q_{11}+q_{22}}2,
  \qquad
  v=\frac{q_{11}-q_{22}}2,
  \qquad
  u=\Re q_{12},
  \qquad
  \omega=\Im q_{12},
\]
and put
\[
  r=(v^2+u^2+\omega^2)^{1/2}.
\]
Then
\[
  \Per(M-q)
  =
  (x-\ii(s+r))(x-\ii(s-r)),
  \qquad
  \Tr q^2=2(s^2+r^2).
\]
The absolute Jacobian of the linear transformation
\((s,v)\mapsto(q_{11},q_{22})\) is \(2\).  Hence
\eqref{eq:aux-q-law} becomes
\[
  \frac{N^2}{\pi^2}e^{-N(s^2+r^2)}
  \,\dd s\,\dd v\,\dd u\,\dd\omega.
\]
Using polar coordinates in \(\R^3\), together with \(|S^2|=4\pi\), gives
\begin{equation}\label{eq:GUE-s-r-density}
  \frac{4N^2}{\pi}r^2e^{-N(s^2+r^2)}\,\dd s\,\dd r,
  \qquad s\in\R,\quad r\ge0.
\end{equation}
Now set
\[
  y_1=s+r,
  \qquad
  y_2=s-r.
\]
This maps \(\R\times[0,\infty)\) onto the half-plane
\(\{(y_1,y_2)\in\R^2:y_1\ge y_2\}\), up to boundary sets of Lebesgue
measure zero.  Moreover,
\[
  \dd s\,\dd r=\frac12\,\dd y_1\,\dd y_2,
  \qquad
  r=\frac{y_1-y_2}{2}.
\]
Therefore the push-forward of \eqref{eq:GUE-s-r-density} has density
\begin{equation}\label{eq:ordered-beta2-density}
  \frac{N^2}{2\pi}(y_1-y_2)^2
  e^{-\frac N2(y_1^2+y_2^2)}\,\dd y_1\,\dd y_2
\end{equation}
on \(y_1\ge y_2\).  The function
\[
  (y_1,y_2)\longmapsto
  (x-\ii y_1)^N(x-\ii y_2)^N
\]
is symmetric in \(y_1,y_2\).  Since \(Z_{2,N}=4\pi/N^2\), integrating
this symmetric function against \eqref{eq:ordered-beta2-density} on
\(y_1\ge y_2\) is equal to integrating it against \(\Gamma_{2,N}\) on
all of \(\R^2\).  This proves \eqref{eq:GUE-per-scalar}.

We next consider the GOE.  Starting from \eqref{eq:GOE-per-duality}, make
the same reflection \(q_{11}\mapsto-q_{11}\), and write
\[
  \eta=\eta_1+\ii\eta_2.
\]
With \(s,v,u,\omega\) as above, set
\[
  r=
  \bigl(v^2+u^2+\omega^2+\eta_1^2+\eta_2^2\bigr)^{1/2}.
\]
Then
\[
  \Per(M-q)+|\eta|^2
  =
  (x-\ii(s+r))(x-\ii(s-r)).
\]
Combining \eqref{eq:aux-q-law} and \eqref{eq:eta-law}, and including the
Jacobian \(2\) from \((s,v)\mapsto(q_{11},q_{22})\), gives
\[
  \frac{N^3}{\pi^3}e^{-N(s^2+r^2)}
  \,\dd s\,\dd v\,\dd u\,\dd\omega\,\dd\eta_1\,\dd\eta_2.
\]
Using polar coordinates in \(\R^5\), together with
\(|S^4|=8\pi^2/3\), gives
\begin{equation}\label{eq:GOE-s-r-density}
  \frac{8N^3}{3\pi}r^4e^{-N(s^2+r^2)}\,\dd s\,\dd r,
  \qquad s\in\R,\quad r\ge0.
\end{equation}
Under the same transformation \(y_1=s+r\), \(y_2=s-r\), the push-forward
of \eqref{eq:GOE-s-r-density} has density
\begin{equation}\label{eq:ordered-beta4-density}
  \frac{N^3}{12\pi}(y_1-y_2)^4
  e^{-\frac N2(y_1^2+y_2^2)}\,\dd y_1\,\dd y_2
\end{equation}
on the half-plane \(y_1\ge y_2\).  Since \(Z_{4,N}=24\pi/N^3\) and the
integrand is symmetric in \(y_1,y_2\), integration against
\eqref{eq:ordered-beta4-density} on \(y_1\ge y_2\) equals integration
against \(\Gamma_{4,N}\) on all of \(\R^2\).  This proves
\eqref{eq:GOE-per-scalar}.

\end{proof}

\section{Gaussian \texorpdfstring{$L^2$}{L2} norms}

By \Cref{lem:Z-alpha}, it is convenient to set
\begin{equation}\label{eq:Z-alpha-base}
  Z_\alpha^\circ
  :=
  Z_{\alpha,2}
  =
  2^{\alpha/2}\sqrt\pi\,
  \Gamma\!\left(\frac{\alpha+1}{2}\right),
  \qquad \alpha>-1.
\end{equation}
Then, for every \(N\ge1\),
\begin{equation}\label{eq:Z-alpha-N-value}
  Z_{\alpha,N}
  =
  \left(\frac2N\right)^{1+\alpha/2}Z_\alpha^\circ.
\end{equation}
For \(u>0\) and \(m\in\mathbb N_0\), we use the rising factorial
(or Pochhammer symbol)
\[
  (u)_0:=1,
  \qquad
  (u)_m:=u(u+1)\cdots(u+m-1)
  =\frac{\Gamma(u+m)}{\Gamma(u)}
  \quad(m\ge1).
\]
For \(m\ge0\), define
\begin{equation}\label{eq:J-alpha-m}
  J_m^{(\alpha)}
  :=
  \int_{\R^3}
  e^{-(X^2+B_1^2+B_2^2)}
  (X-\ii B_1)^m(X-\ii B_2)^m
  |B_1-B_2|^\alpha
  \,\dd X\,\dd B_1\,\dd B_2.
\end{equation}

\begin{lemma}
For \(\alpha>-1\) and \(|t|<1/2\),
\begin{equation}\label{eq:J-alpha-generating}
  \sum_{m=0}^\infty\frac{J_m^{(\alpha)}}{m!}t^m
  =
  \frac{\sqrt\pi\,Z_\alpha^\circ}
       {(1-t/2)^{1+\alpha/2}},
\end{equation}
where the power is taken with the analytic branch equal to \(1\) at
\(t=0\).  Consequently,
\begin{equation}\label{eq:J-alpha-value}
  J_m^{(\alpha)}
  =
  \sqrt\pi\,Z_\alpha^\circ\,
  2^{-m}\left(1+\frac\alpha2\right)_m.
\end{equation}
\end{lemma}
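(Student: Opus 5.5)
The plan is to compute the generating function directly by summing under the integral sign, then do three one-dimensional Gaussian integrals. For fixed \((X,B_1,B_2)\),
\[
  \sum_{m\ge0}\frac{t^m}{m!}(X-\ii B_1)^m(X-\ii B_2)^m=\exp\bigl(t(X-\ii B_1)(X-\ii B_2)\bigr).
\]

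\emph{Step 1: justify the interchange.} Since \(|X-\ii B_1|\,|X-\ii B_2|\le X^2+\tfrac12(B_1^2+B_2^2)\), the sum of absolute values is at most
\[
  \exp\bigl(|t|X^2+\tfrac{|t|}{2}(B_1^2+B_2^2)\bigr).
\]
For \(|t|<1/2\), this bound times \(e^{-(X^2+B_1^2+B_2^2)}|B_1-B_2|^\alpha\) is integrable, because \(\alpha>-1\) makes \(|B_1-B_2|^\alpha\) locally integrable. Fubini--Tonelli then gives
\[
  \sum_m \frac{J_m^{(\alpha)}}{m!}t^m=\int_{\R^3} e^{-(X^2+B_1^2+B_2^2)+t(X-\ii B_1)(X-\ii B_2)}|B_1-B_2|^\alpha\,\dd X\,\dd B_1\,\dd B_2 .
\]

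\emph{Step 2: the \(X\)-integral.} The exponent in \(X\) is \(-(1-t)X^2-\ii t(B_1+B_2)X-tB_1B_2\). Because \(\Re(1-t)>0\), the complex Gaussian formula with the principal branch of \((1-t)^{-1/2}\) gives
\[
  \sqrt{\pi/(1-t)}\,\exp\!\Bigl(-\tfrac{t^2(B_1+B_2)^2}{4(1-t)}-tB_1B_2\Bigr).
\]

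\emph{Step 3: the \((B_1,B_2)\)-integral.} As in \Cref{lem:Z-alpha}, I would set \(s=(B_1+B_2)/2\) and \(r=(B_1-B_2)/2\). Then \(\dd B_1\,\dd B_2=2\,\dd s\,\dd r\), \(|B_1-B_2|^\alpha=2^\alpha|r|^\alpha\), and \(B_1B_2=s^2-r^2\). Collecting terms, the \(s\)-coefficient is
\[
  -\bigl[2+t+t^2/(1-t)\bigr]=-(2-t)/(1-t),
\]
and the \(r\)-coefficient is \(-(2-t)\). Both have positive real part for \(|t|<1/2\), so the integrals converge. The \(s\)-integral gives \(\sqrt{\pi(1-t)/(2-t)}\), and the \(r\)-integral gives \(\Gamma(\tfrac{\alpha+1}2)(2-t)^{-(\alpha+1)/2}\). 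The factors \(\sqrt{1-t}\) cancel, leaving
\[
  \pi\,2^{\alpha+1}\Gamma\!\left(\tfrac{\alpha+1}2\right)(2-t)^{-1-\alpha/2}
  =\pi\,2^{\alpha/2}\Gamma\!\left(\tfrac{\alpha+1}2\right)(1-t/2)^{-1-\alpha/2}.
\]
By \eqref{eq:Z-alpha-base}, the right-hand side equals \(\sqrt\pi\,Z_\alpha^\circ(1-t/2)^{-1-\alpha/2}\), which is \eqref{eq:J-alpha-generating}.

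The main care point is the branch bookkeeping for complex \(t\). All square roots and powers are principal on the right half-plane, which contains \(1-t\), \(2-t\) and \((2-t)/(1-t)\) for \(|t|<1/2\). The product of these principal branches equals the stated analytic branch, since both sides are analytic in \(t\) on the disc and agree for real \(t\) (or at \(t=0\)). Alternatively, one can do the computation for real \(t\in(-1/2,1/2)\) and extend by analyticity.

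\emph{Step 4: extract coefficients.} The binomial series
\[
  (1-t/2)^{-a}=\sum_{m\ge0}\frac{(a)_m}{m!}\left(\frac t2\right)^m, \qquad a=1+\alpha/2,
\]
holds for \(|t|<2\). Comparing coefficients of \(t^m\), which is legitimate by uniqueness of power series on \(|t|<1/2\), gives \eqref{eq:J-alpha-value}.
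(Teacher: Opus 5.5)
Your proposal is correct and follows essentially the same route as the paper: you justify summing under the integral by the same domination bound, evaluate the resulting Gaussian integral in closed form, and read off \(J_m^{(\alpha)}\) from the binomial series. The differences are only cosmetic. You integrate out \(X\) before passing to the sum/difference coordinates \((s,r)\) instead of the orthogonal \((U,V)\), and you treat complex \(t\) directly via principal branches, whereas the paper computes for real \(t\) and extends by the identity theorem.
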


\begin{proof}
For \(t\) in a compact subset of \(\{|t|<1/2\}\), choose \(r<1/2\) such
that \(|t|\le r\).  Using
\[
  |X-\ii B_1|\,|X-\ii B_2|
  \le
  X^2+\frac{B_1^2+B_2^2}{2},
\]
we have
\[
  \left|e^{t(X-\ii B_1)(X-\ii B_2)}\right|
  \le
  \exp\!\left(r\left[X^2+\frac{B_1^2+B_2^2}{2}\right]\right).
\]
Therefore the absolute value of the integrand below is bounded by
\[
  |B_1-B_2|^\alpha
  e^{-(1-r)X^2-(1-r/2)(B_1^2+B_2^2)},
\]
which is integrable on \(\R^3\) because \(\alpha>-1\).  Dominated
convergence permits summation under the integral sign and gives
\begin{align}
  \sum_{m=0}^\infty\frac{J_m^{(\alpha)}}{m!}t^m
  &=
  \int_{\R^3}
  e^{-(X^2+B_1^2+B_2^2)}|B_1-B_2|^\alpha
  \notag\\
  &\qquad\times
  e^{t(X-\ii B_1)(X-\ii B_2)}
  \,\dd X\,\dd B_1\,\dd B_2.
  \label{eq:J-generating-integral}
\end{align}
Make the orthogonal change of variables
\[
  U=\frac{B_1+B_2}{\sqrt2},
  \qquad
  V=\frac{B_1-B_2}{\sqrt2}.
\]
Then
\[
  |B_1-B_2|^\alpha=2^{\alpha/2}|V|^\alpha
\]
and
\[
  (X-\ii B_1)(X-\ii B_2)
  =
  X^2-\ii\sqrt2\,XU-\frac{U^2-V^2}{2}.
\]
Thus the right-hand side of \eqref{eq:J-generating-integral} becomes
\begin{align}
  2^{\alpha/2}
  \int_{\R^3}|V|^\alpha
  \exp\!\bigg(
  &-(1-t)X^2
   -\left(1+\frac t2\right)U^2 \notag\\
  &-\left(1-\frac t2\right)V^2
   -\ii\sqrt2\,tXU
  \bigg)
  \,\dd X\,\dd U\,\dd V.
  \label{eq:J-after-orthogonal-change}
\end{align}
We first take \(t\in(-1/2,1/2)\).  The standard Gaussian identity
\[
  \int_{\R}e^{-AX^2+bX}\,\dd X
  =
  \sqrt{\frac\pi A}\,e^{b^2/(4A)},
  \qquad A>0,\quad b\in\C,
\]
which follows from the real formula by analytic continuation, gives
\[
  \int_{\R}
  e^{-(1-t)X^2-\ii\sqrt2\,tXU}\,\dd X
  =
  \sqrt{\frac{\pi}{1-t}}
  \exp\!\left(-\frac{t^2}{2(1-t)}U^2\right).
\]
The subsequent \(U\)-integration yields
\begin{equation}\label{eq:XU-Gaussian}
  \int_{\R^2}
  e^{-(1-t)X^2-(1+t/2)U^2-\ii\sqrt2\,tXU}
  \,\dd X\,\dd U
  =
  \frac{\pi}{\sqrt{1-t/2}}.
\end{equation}
The remaining integral is
\begin{equation}\label{eq:V-Gaussian}
  \int_{\R}|V|^\alpha e^{-(1-t/2)V^2}\,\dd V
  =
  \Gamma\!\left(\frac{\alpha+1}{2}\right)
  (1-t/2)^{-(\alpha+1)/2}.
\end{equation}
Combining \eqref{eq:J-after-orthogonal-change},
\eqref{eq:XU-Gaussian}, \eqref{eq:V-Gaussian}, and
\eqref{eq:Z-alpha-base}, we obtain \eqref{eq:J-alpha-generating} for real
\(|t|<1/2\).  The integral in \eqref{eq:J-generating-integral} and the
right-hand side of \eqref{eq:J-alpha-generating} are holomorphic in the disc
\(|t|<1/2\), so the identity theorem gives the formula throughout that disc.
Finally, comparison with the binomial expansion
\[
  (1-s)^{-u}
  =
  \sum_{m=0}^\infty\frac{(u)_m}{m!}s^m,
  \qquad |s|<1,
\]
with \(u=1+\alpha/2\) and \(s=t/2\), gives
\eqref{eq:J-alpha-value}.
\end{proof}

For a polynomial \(M\), write
\[
  \|M\|_N^2
  :=
  \int_{\R}|M(x)|^2e^{-Nx^2/2}\,\dd x.
\]

\begin{theorem}\label{thm:common-L2}
Let \(\beta\in\{1,2\}\), let \(H_N\sim\Pp_{\beta,N}\), and define the
monic polynomial
\[
  R_N(z):=\ii^{-N}P_N(\ii z),
  \qquad z\in\C.
\]
Then
\begin{equation}\label{eq:common-L2}
  \E\|R_N\|_N^2
  =
  \sqrt{2\pi}\,
  \frac{(1+2/\beta)_N}{N^{N+1/2}}
  =
  \kappa_{\beta,N}h_N,
\end{equation}
where
\[
  h_N:=\sqrt{2\pi}\frac{N!}{N^{N+1/2}},
  \qquad
  \kappa_{\beta,N}:=\frac{(1+2/\beta)_N}{N!}.
\]
\end{theorem}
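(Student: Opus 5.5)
We compute \(\E|R_N(x)|^2\) pointwise from \Cref{prop:scalar-permanent-duality}, integrate in \(x\) against \(e^{-Nx^2/2}\), and reduce the resulting triple integral to \(J_N^{(\alpha)}\) with \(\alpha=4/\beta-2\in\{2,0\}\)... wait: \(\beta=2\) gives \(\alpha=2\) and \(\beta=1\) gives \(\alpha=4\). So we take \(\alpha=2\beta'\) with the rule \(\alpha=4/\beta\). This matches \(1+\alpha/2=1+2/\beta\).

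For real \(x\), \Cref{lem:real-coefficients} shows that \(P_N\) has real coefficients. Hence \(\overline{P_N(\ii x)}=P_N(-\ii x)\), and
\[
  |R_N(x)|^2=|P_N(\ii x)|^2=P_N(\ii x)P_N(-\ii x).
\]
By \Cref{prop:scalar-permanent-duality},
\[
  \E|R_N(x)|^2=\frac{1}{Z_{\alpha,N}}\int_{\R^2}(x-\ii y_1)^N(x-\ii y_2)^N|y_1-y_2|^\alpha e^{-\frac N2(y_1^2+y_2^2)}\,\dd y_1\,\dd y_2 .
\]
Multiplying by \(e^{-Nx^2/2}\) and integrating over \(x\), Tonelli applied to \(\E|R_N(x)|^2\ge0\) gives \(\E\|R_N\|_N^2=\int_\R \E|R_N(x)|^2e^{-Nx^2/2}\,\dd x\). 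Absolute convergence of the resulting triple integral (polynomial times Gaussian) justifies writing it as one integral over \(\R^3\).

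Next rescale \(x=\sqrt{2/N}\,X\) and \(y_\ell=\sqrt{2/N}\,B_\ell\). The exponent becomes \(-(X^2+B_1^2+B_2^2)\) and the polynomial factor becomes \((2/N)^N(X-\ii B_1)^N(X-\ii B_2)^N\). The Vandermonde-type factor contributes \((2/N)^{\alpha/2}\) and the Jacobian contributes \((2/N)^{3/2}\). Thus
\[
  \E\|R_N\|_N^2=\frac{(2/N)^{N+\alpha/2+3/2}}{Z_{\alpha,N}}J_N^{(\alpha)} .
\]
Inserting \eqref{eq:Z-alpha-N-value} and \eqref{eq:J-alpha-value}, the factor \(Z_\alpha^\circ\) cancels and the powers of \(2/N\) combine to \((2/N)^{N+1/2}\). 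We obtain
\[
  \E\|R_N\|_N^2=\left(\frac2N\right)^{N+1/2}\sqrt\pi\,2^{-N}(1+\alpha/2)_N=\sqrt{2\pi}\,\frac{(1+2/\beta)_N}{N^{N+1/2}},
\]
which is the first equality in \eqref{eq:common-L2}. The second equality is immediate from the definitions of \(h_N\) and \(\kappa_{\beta,N}\).

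The only delicate points are the following.
\begin{enumerate}[label=(\roman*)]
\item Correctly identifying \(\alpha=4/\beta\): \(\alpha=2\) for the GUE and \(\alpha=4\) for the GOE.
\item Bookkeeping of the powers of \(2/N\) in the rescaling, which I expect to be the main (though routine) obstacle.
\item Justifying the use of Fubini, which follows from the bound \(|x-\ii y_\ell|^N\le (1+x^2+y_\ell^2)^{N/2}\) against the Gaussian weight.
\end{enumerate}
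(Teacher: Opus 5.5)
Your proposal is correct and follows the paper's proof essentially step for step. Both use the real-coefficient identity $|R_N(x)|^2=P_N(\ii x)P_N(-\ii x)$, apply the scalar duality with $\alpha=4/\beta$, rescale by $\sqrt{2/N}$, and let $Z_\alpha^\circ$ cancel against the closed form for $J_N^{(\alpha)}$; your power-of-$2/N$ bookkeeping is right. In a write-up, drop the false start about $\alpha=4/\beta-2$ and $\alpha=2\beta'$, since only $\alpha=4/\beta$ is used.
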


\begin{proof}
By \Cref{lem:real-coefficients}, \(P_N\) has real coefficients.  Therefore,
for every \(x\in\R\),
\[
  |R_N(x)|^2=P_N(\ii x)P_N(-\ii x).
\]
 Using
\Cref{prop:scalar-permanent-duality} with \(\alpha=4/\beta\), 
 \begin{align*}
  \E\|R_N\|_N^2
  &=
  \int_{\R}e^{-Nx^2/2}
  \E\!\left[P_N(\ii x)P_N(-\ii x)\right]\,\dd x\\
  &=
  \frac1{Z_{\alpha,N}}
  \int_{\R^3}
  e^{-\frac N2(x^2+y_1^2+y_2^2)}
  (x-\ii y_1)^N(x-\ii y_2)^N
  |y_1-y_2|^\alpha
  \,\dd x\,\dd y_1\,\dd y_2.
\end{align*}
Make the change of variables
\[
  x=\sqrt{\frac2N}\,X,
  \qquad
  y_j=\sqrt{\frac2N}\,B_j,
  \qquad j=1,2.
\]
Combining this scaling with \eqref{eq:Z-alpha-N-value},
\eqref{eq:J-alpha-m}, and \eqref{eq:J-alpha-value}, we obtain
\[
  \E\|R_N\|_N^2
  =
  \left(\frac2N\right)^{N+1/2}
  \frac{J_N^{(\alpha)}}{Z_\alpha^\circ}
  =
  \sqrt{2\pi}\,
  \frac{(1+\alpha/2)_N}{N^{N+1/2}}.
\]
Since \(\alpha=4/\beta\), this proves \eqref{eq:common-L2}.
\end{proof}

\section{Distribution of zeros}

If \(M\) is a polynomial of exact degree \(N\), with zeros
\(z_1,\dots,z_N\) counted with algebraic multiplicity, define its normalized
zero counting measure by
\[
  \nu_M:=\frac1N\sum_{j=1}^N\delta_{z_j}.
\]

\subsection{Hermite polynomials}

Recall the Gaussian weighted norm
\[
  \|M\|_N^2
  :=
  \int_{\R}|M(x)|^2e^{-Nx^2/2}\,\dd x.
\]
Let \(\mathsf H_n\) denote the  Hermite polynomial,
\[
  \mathsf H_n(y)
  :=
  (-1)^ne^{y^2}\frac{\dd^n}{\dd y^n}e^{-y^2}.
\]
It has leading coefficient \(2^n\) and satisfies
\begin{equation}\label{eq:Hermite-orthogonality}
  \int_{\R}\mathsf H_n(y)\mathsf H_m(y)e^{-y^2}\,\dd y
  =
  \sqrt\pi\,2^n n!\,\delta_{nm}.
\end{equation}
For \(0\le n\le N\), define the scaled monic Hermite polynomial
\begin{equation}\label{eq:scaled-monic-Hermite}
  \pi_{n,N}(x)
  :=
  \frac{\mathsf H_n\!\left(\sqrt{N/2}\,x\right)}
       {2^n(N/2)^{n/2}}.
\end{equation}
The substitution \(y=\sqrt{N/2}\,x\) in
\eqref{eq:Hermite-orthogonality} gives
\begin{equation}\label{eq:scaled-Hermite-norm}
  h_{n,N}
  :=
  \|\pi_{n,N}\|_N^2
  =
  \sqrt{2\pi}\,\frac{n!}{N^{n+1/2}}.
\end{equation}
In particular,
\[
  h_{N,N}=h_N
  =
  \sqrt{2\pi}\,\frac{N!}{N^{N+1/2}},
\]
where \(h_N\) is the quantity appearing in \Cref{thm:common-L2}.
Stirling's formula yields
\begin{equation}\label{eq:Hermite-norm-root}
  \lim_{N\to\infty}h_N^{1/(2N)}=e^{-1/2}.
\end{equation}

\begin{lemma}\label{lem:least-monic-L2}
If \(M\) is any monic complex polynomial of degree \(N\), then
\[
  \|M\|_N^2\ge h_N.
\]
Equality holds if and only if \(M=\pi_{N,N}\).
\end{lemma}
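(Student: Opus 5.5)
The plan is the standard orthogonal-polynomial extremal argument. The polynomials \(\pi_{0,N},\dots,\pi_{N,N}\) are monic and have exact degrees \(0,\dots,N\). Hence they form a basis of the space of complex polynomials of degree at most \(N\). By \eqref{eq:Hermite-orthogonality}, after the substitution \(y=\sqrt{N/2}\,x\), they are mutually orthogonal for the inner product
\[
\langle f,g\rangle_N:=\int_{\R}f(x)\overline{g(x)}e^{-Nx^2/2}\,\dd x .
\]
This uses that \(\mathsf H_n\) has real coefficients, so complex conjugation does not affect the orthogonality relation.

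Let \(M\) be any monic complex polynomial of degree \(N\). Since \(M\) and \(\pi_{N,N}\) are both monic of degree \(N\), the difference \(D:=M-\pi_{N,N}\) has degree at most \(N-1\). We expand
\[
D=\sum_{n=0}^{N-1}c_n\pi_{n,N},\qquad c_n\in\C .
\]
Orthogonality gives \(\langle \pi_{N,N},D\rangle_N=0\), and therefore
\[
\|M\|_N^2=\|\pi_{N,N}\|_N^2+\|D\|_N^2=h_N+\sum_{n=0}^{N-1}|c_n|^2h_{n,N}.
\]
Here \(h_{n,N}\) is given by \eqref{eq:scaled-Hermite-norm}, and every \(h_{n,N}\) is strictly positive. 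This already proves the inequality \(\|M\|_N^2\ge h_N\).

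For the equality case, equality forces \(\sum_{n}|c_n|^2h_{n,N}=0\). Since each \(h_{n,N}>0\), every \(c_n\) vanishes, so \(D=0\) and \(M=\pi_{N,N}\). Equivalently, \(\|D\|_N=0\) forces \(D\) to vanish almost everywhere on \(\R\), and a polynomial with that property is identically zero. Conversely, \(M=\pi_{N,N}\) clearly attains equality, by \eqref{eq:scaled-Hermite-norm}.

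There is no real obstacle in this argument. The only points needing care are the two checks above: orthogonality must hold for the complex (Hermitian) inner product, which it does because the Hermite polynomials are real; and the \(\pi_{n,N}\) must span all polynomials of degree less than \(N\), which follows from their being monic of exact degree \(n\).
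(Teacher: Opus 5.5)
Your proposal is correct and takes the same approach as the paper: write \(M=\pi_{N,N}+S\) with \(\deg S\le N-1\), use the orthogonality of \(\pi_{N,N}\) to all lower-degree polynomials to get \(\|M\|_N^2=h_N+\|S\|_N^2\), and read off both the inequality and the equality case. Your remarks on the Hermitian inner product (the Hermite polynomials are real) and on why \(\|S\|_N=0\) forces \(S=0\) just spell out steps the paper leaves implicit.
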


\begin{proof}
Set \(S=M-\pi_{N,N}\).  Then \(\deg S\le N-1\), while
\(\pi_{N,N}\) is orthogonal in
\(L^2(\R,e^{-Nx^2/2}\dd x)\) to every polynomial of degree at most
\(N-1\).  Hence
\[
  \|M\|_N^2
  =
  \|\pi_{N,N}\|_N^2+\|S\|_N^2
  =
  h_N+\|S\|_N^2.
\]
The assertion follows.
\end{proof}

We shall also use Mehler's formula \cite[Example~4.18]{Janson1997}.  Define
\[
  \psi_k(y)
  :=
  \frac{\mathsf H_k(y)e^{-y^2/2}}
       {\bigl(\sqrt\pi\,2^k k!\bigr)^{1/2}}.
\]
For \(-1<r<1\) and \(x,y\in\R\),
\[
  \sum_{k=0}^{\infty}r^k\psi_k(x)\psi_k(y)
  =
  \frac1{\sqrt{\pi(1-r^2)}}
  \exp\!\left(
    -\frac{(1+r^2)(x^2+y^2)-4rxy}{2(1-r^2)}
  \right).
\]
In particular,
\begin{equation}\label{eq:Mehler-diagonal}
  \sum_{k=0}^{\infty}r^k\psi_k(y)^2
  =
  \frac1{\sqrt{\pi(1-r^2)}}
  \exp\!\left(-\frac{1-r}{1+r}y^2\right),
  \qquad 0<r<1.
\end{equation}

Put
\[
  \phi_{k,N}(x)
  :=
  \frac{\pi_{k,N}(x)e^{-Nx^2/4}}{\sqrt{h_{k,N}}},
  \qquad
  K_N(x,x)
  :=
  \sum_{k=0}^N|\phi_{k,N}(x)|^2.
\]
The functions \(\phi_{k,N}\) are orthonormal in \(L^2(\R,\dd x)\).

\begin{lemma}\label{lem:kernel-Nikolskii}
There is a constant \(C>0\), independent of \(N\), such that
\begin{equation}\label{eq:global-kernel-bound}
  \sup_{x\in\R}K_N(x,x)\le CN,
  \qquad N\ge1.
\end{equation}
Consequently, every complex polynomial \(M\) of degree at most \(N\)
satisfies
\begin{equation}\label{eq:Nikolskii-bound}
  \sup_{x\in\R}|M(x)|e^{-Nx^2/4}
  \le
  \sqrt{CN}\,\|M\|_N.
\end{equation}
\end{lemma}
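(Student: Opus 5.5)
The plan is to reduce \(K_N(x,x)\) to the classical Hermite functions \(\psi_k\) and then use the diagonal Mehler formula \eqref{eq:Mehler-diagonal} with \(r\) close to \(1\).

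\emph{Step 1: rescaling.} With \(y=\sqrt{N/2}\,x\), definitions \eqref{eq:scaled-monic-Hermite} and \eqref{eq:scaled-Hermite-norm} give
\[
  \phi_{k,N}(x)=\Bigl(\frac N2\Bigr)^{1/4}\psi_k(y),
  \qquad\text{hence}\qquad
  K_N(x,x)=\sqrt{\frac N2}\sum_{k=0}^N\psi_k(y)^2 .
\]
To check this, note that \(\pi_{k,N}(x)e^{-Nx^2/4}=\mathsf H_k(y)e^{-y^2/2}/(2^k(N/2)^{k/2})\). Dividing by \(\sqrt{h_{k,N}}=(2\pi)^{1/4}\sqrt{k!}\,N^{-k/2-1/4}\) yields \(\psi_k(y)\) times the factor \((\sqrt\pi 2^kk!)^{1/2}N^{k/2+1/4}/(2^k(N/2)^{k/2}(2\pi)^{1/4}\sqrt{k!})\). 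The powers of \(2\) and \(\pi\) collapse, and this factor simplifies to \((N/2)^{1/4}\). So it suffices to show \(\sup_y\sum_{k\le N}\psi_k(y)^2\le C\sqrt N\).

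\emph{Step 2: Mehler bound.} For \(0<r<1\) and \(k\le N\) we have \(r^k\ge r^N\), so
\[
  \sum_{k=0}^N\psi_k(y)^2\le r^{-N}\sum_{k\ge0}r^k\psi_k(y)^2
  \le \frac{r^{-N}}{\sqrt{\pi(1-r^2)}}
\]
by \eqref{eq:Mehler-diagonal}, since the exponential factor there is at most \(1\). Choose \(r=1-1/N\) for \(N\ge2\). Then \(r^{-N}\le 4\), and \(1-r^2\ge 1/N\), so the sum is at most \(4\sqrt N/\sqrt\pi\). The case \(N=1\) is trivial, because each \(\psi_k\) is bounded. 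This proves \eqref{eq:global-kernel-bound}, giving \(K_N(x,x)\le CN\) after multiplying by \(\sqrt{N/2}\).

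\emph{Step 3: Nikolskii inequality.} Let \(M\) have degree at most \(N\). Since the \(\pi_{k,N}\) with \(k\le N\) are monic of degree \(k\), they span the polynomials of degree at most \(N\), and \(\phi_{k,N}\) are orthonormal. Write \(M(x)e^{-Nx^2/4}=\sum_{k\le N}c_k\phi_{k,N}(x)\) with \(\sum|c_k|^2=\|M\|_N^2\). Cauchy--Schwarz then gives
\[
  |M(x)|e^{-Nx^2/4}\le \|M\|_N\,K_N(x,x)^{1/2}\le\sqrt{CN}\,\|M\|_N .
\]

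The only delicate point is the constant bookkeeping in Step 1. The Mehler step is routine once \(r=1-1/N\) is chosen to balance \(r^{-N}\) against \((1-r^2)^{-1/2}\).
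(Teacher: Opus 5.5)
Your proof is correct and follows essentially the paper's argument: the same rescaling $\phi_{k,N}(x)=(N/2)^{1/4}\psi_k(\sqrt{N/2}\,x)$, the diagonal Mehler bound with $r$ close to $1$, and the same Cauchy--Schwarz step. The only difference is cosmetic: the paper takes $r=e^{-1/(N+1)}$, which gives $r^{-N}\le e$ for all $N\ge1$, whereas your choice $r=1-1/N$ (also valid) forces you to treat $N=1$ separately.
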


\begin{proof}
From \eqref{eq:scaled-monic-Hermite} and
\eqref{eq:scaled-Hermite-norm},
\[
  \phi_{k,N}(x)
  =
  \left(\frac N2\right)^{1/4}
  \psi_k\!\left(\sqrt{N/2}\,x\right).
\]
Choose \(r=e^{-1/(N+1)}\).  Since \(r^k\ge r^N\ge e^{-1}\) for
\(0\le k\le N\), \eqref{eq:Mehler-diagonal} gives, uniformly in
\(y\in\R\),
\[
  \sum_{k=0}^N\psi_k(y)^2
  \le
  e\sum_{k=0}^{\infty}r^k\psi_k(y)^2
  \le
  \frac{e}{\sqrt{\pi(1-e^{-2/(N+1)})}}
  \le
  C_0\sqrt N.
\]
Here we used \(1-e^{-u}\ge u/2\) for \(0\le u\le1\).  Multiplication by
\((N/2)^{1/2}\) proves \eqref{eq:global-kernel-bound}.

Expand \(M=\sum_{k=0}^Nc_k\pi_{k,N}\).  By Cauchy--Schwarz,
\begin{align*}
  |M(x)|e^{-Nx^2/4}
  &\le
  \left(\sum_{k=0}^N|c_k|^2h_{k,N}\right)^{1/2}
  \left(\sum_{k=0}^N
    \frac{|\pi_{k,N}(x)|^2e^{-Nx^2/2}}{h_{k,N}}
  \right)^{1/2}\\
  &=
  \|M\|_N K_N(x,x)^{1/2}.
\end{align*}
Taking the supremum and using \eqref{eq:global-kernel-bound} proves
\eqref{eq:Nikolskii-bound}.
\end{proof}

\subsection{Distribution of zeros}
Set
\[
  w(x):=e^{-x^2/4}.
\]
For this Gaussian weight, the weighted equilibrium measure is the standard
semicircle law \(\mu_{\mathrm{sc}}\), and the corresponding modified Robin
constant is \(F_w=1/2\).  This is the classical quadratic equilibrium
problem; see \cite[Exercise~2.6.4]{AndersonGuionnetZeitouni2010}.

For a monic complex polynomial \(M\) of degree \(N\), define
\[
  \|w^NM\|_{\R}
  :=
  \sup_{x\in\R}|M(x)|e^{-Nx^2/4},
\]
and let
\begin{equation}\label{eq:weighted-Chebyshev-number}
  t_N(w)
  :=
  \inf\left\{
    \|w^NM\|_{\R}:
    M\text{ is a monic complex polynomial of degree }N
  \right\}.
\end{equation}

\begin{theorem}\label{thm:Saff-Totik-specialized}
For the Gaussian weight \(w(x)=e^{-x^2/4}\), the following statements hold.
\begin{enumerate}[label=\textup{(\alph*)},leftmargin=2em]
\item
\begin{equation}\label{eq:weighted-Chebyshev-limit}
  \lim_{N\to\infty}t_N(w)^{1/N}=e^{-F_w}=e^{-1/2}.
\end{equation}

\item
Let \(M_N\) be a sequence of monic complex polynomials of exact degree
\(N\).  If
\begin{equation}\label{eq:weighted-asymptotic-extremality}
  \lim_{N\to\infty}\|w^NM_N\|_{\R}^{1/N}=e^{-1/2},
\end{equation}
then
\[
  \nu_{M_N}\weak\mu_{\mathrm{sc}}.
\]
\end{enumerate}
\end{theorem}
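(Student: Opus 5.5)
This is the specialization of the Saff--Totik theory of weighted polynomials to the Gaussian weight \(w(x)=e^{-x^2/4}\) on \(\R\). The plan is to verify the hypotheses of the general theorems and to identify the equilibrium data explicitly.

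\textbf{Admissibility and equilibrium data.} The weight is continuous and positive on \(\R\), and \(|x|w(x)\to0\) as \(|x|\to\infty\). Hence \(w\) is admissible in the sense of Saff--Totik, and the weighted energy problem with external field \(Q(x)=-\log w(x)=x^2/4\) has a unique minimizer \(\mu_w\) with compact support. To identify it, we check the variational (Frostman) conditions for \(\mu_{\mathrm{sc}}\). The classical computation gives
\[
  \int\log|x-t|\,\dd\mu_{\mathrm{sc}}(t)=\frac{x^2}{4}-\frac12 \quad\text{for } x\in[-2,2],
\]
with the inequality \(\ge\) outside \([-2,2]\). This is equivalent to \(2U^{\mu_{\mathrm{sc}}}+2Q\) being constant, with value \(2F_w\), on the support and at least that constant elsewhere. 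Uniqueness then gives \(\mu_w=\mu_{\mathrm{sc}}\) and \(F_w=1/2\), which matches the cited exercise in \cite{AndersonGuionnetZeitouni2010}.

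\textbf{Part (a).} I would invoke the weighted Chebyshev theorem (Saff--Totik, Theorem III.3.1): \(t_N(w)^{1/N}\to e^{-F_w}\). If one wants a direct argument in this setting, it has two halves.
\begin{itemize}[leftmargin=2em]
  \item \emph{Lower bound.} Use the weighted Bernstein--Walsh-type inequality \(|M(z)|\le \|w^NM\|_{\R}\,e^{N(-U^{\mu_w}(z)+F_w)}\), valid for \(z\in\C\). Evaluating this at large \(|z|\) for a monic \(M\) gives \(\|w^NM\|_{\R}\ge e^{-NF_w}\) exactly. Indeed, \(\log|M(z)|-N\log|z|\to0\) and \(-U^{\mu_w}(z)-\log|z|\to0\) as \(|z|\to\infty\).
  \item \emph{Upper bound.} Use Fekete points for the weighted problem, or polynomials whose zeros discretize \(\mu_{\mathrm{sc}}\). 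Their sup-norms achieve \(e^{-NF_w+o(N)}\).
\end{itemize}
The lower bound is elementary. The upper bound requires the discretization estimate for the potential, which is the more delicate half.

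\textbf{Part (b).} This is Saff--Totik Theorem III.4.2 on asymptotically extremal weighted polynomials. The sketch runs as follows. By the lower bound in (a) together with \eqref{eq:weighted-asymptotic-extremality}, \(\frac1N\log\|w^NM_N\|_{\R}\to-F_w\).
\begin{itemize}[leftmargin=2em]
  \item \emph{Confinement of zeros.} The zeros lie in a bounded set up to \(o(N)\) of them, because zeros far away would make \(|M_N|w^N\) too large on the support. So the zero-counting measures \(\nu_{M_N}\) are tight, and we may pass to a subsequential weak limit \(\nu\).
  \item \emph{Limiting potential.} The bound \(\frac1N\log|M_N(x)|\le\frac{x^2}{4}-\frac12+o(1)\) on \(\R\), combined with the principle of descent and lower envelope arguments, gives \(-U^\nu\le Q-F_w\) quasi-everywhere on \(\mathrm{supp}\,\mu_w\).
  \item \emph{Identification.} Since both \(\nu\) and \(\mu_w\) are probability measures with matching behaviour at infinity, a domination principle or energy comparison forces \(\nu=\mu_w=\mu_{\mathrm{sc}}\).
\end{itemize}

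\textbf{Main obstacle.} The key difficulty is exactly this identification in part (b). In particular, it needs to rule out mass escaping to infinity or off the real axis. I would take it directly from Saff--Totik rather than reprove it.
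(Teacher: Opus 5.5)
Your proposal is correct and matches the paper. The paper also just cites Saff--Totik Theorems~III.3.1 and~III.4.2 and identifies $\mu_w=\mu_{\mathrm{sc}}$, $F_w=1/2$ from the classical quadratic equilibrium problem; your admissibility check, Frostman verification and Bernstein--Walsh lower bound only fill in details the paper leaves to the references. One sign slip: for $|x|>2$ the correct inequality is $\int\log|x-t|\,\dd\mu_{\mathrm{sc}}(t)\le x^2/4-1/2$, not $\ge$ (the left side grows only like $\log|x|$), and this is exactly what your own reformulation $U^{\mu_{\mathrm{sc}}}+Q\ge F_w$ off the support says.
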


Part~\textup{(a)} is the weighted Chebyshev theorem
\cite[Chapter~III, Theorem~3.1]{SaffTotik1997}, specialized to the
Gaussian weight \(w\).  Part~\textup{(b)} is the
zero-distribution theorem for weighted asymptotically extremal monic
polynomials \cite[Chapter~III, Theorem~4.2, p.~170]{SaffTotik1997}; see
also \cite[proof of Theorem~2.3]{PritskerXie2015} for the same application
to Freud weights, with the Gaussian weight as a special case.

\begin{theorem}\label{thm:polynomial-excess-criterion}
Let \((M_N)_{N\ge1}\) be a sequence of random monic polynomials defined on
a common probability space, with \(M_N\) of exact degree \(N\).  Suppose
that there are constants \(A>0\) and \(m>0\) such that
\[
  \E\|M_N\|_N^2\le A N^m h_N,
  \qquad N\ge1.
\]
Then
\[
  \nu_{M_N}\weak\mu_{\mathrm{sc}}
  \qquad\text{almost surely.}
\]
\end{theorem}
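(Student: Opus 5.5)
The plan is to reduce to the deterministic criterion \Cref{thm:Saff-Totik-specialized}(b): show that almost surely
\[
  \lim_{N\to\infty}\|w^NM_N\|_{\R}^{1/N}=e^{-1/2}.
\]
The lower bound is deterministic. For every outcome, $\|w^NM_N\|_{\R}\ge t_N(w)$, and by \eqref{eq:weighted-Chebyshev-limit} we get $\liminf_N\|w^NM_N\|_{\R}^{1/N}\ge e^{-1/2}$. Only the upper bound $\limsup_N\|w^NM_N\|_{\R}^{1/N}\le e^{-1/2}$ a.s. therefore needs probabilistic input.

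For the upper bound, apply the Nikolskii-type inequality \eqref{eq:Nikolskii-bound} with $M=M_N$, which has degree $N$:
\[
  \|w^NM_N\|_{\R}\le\sqrt{CN}\,\|M_N\|_N .
\]
It then suffices to show $\limsup_N\|M_N\|_N^{1/N}\le e^{-1/2}$ almost surely.

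Fix $\varepsilon>0$. By Markov's inequality and the hypothesis,
\[
  \Pp\bigl(\|M_N\|_N^2>e^{2N\varepsilon}h_N\bigr)\le AN^me^{-2N\varepsilon}.
\]
This is summable in $N$, so by Borel--Cantelli, almost surely $\|M_N\|_N^2\le e^{2N\varepsilon}h_N$ for all large $N$. Combined with \eqref{eq:Hermite-norm-root}, which says $h_N^{1/(2N)}\to e^{-1/2}$, this gives
\[
  \limsup_N\|M_N\|_N^{1/N}\le e^{\varepsilon}e^{-1/2}.
\]
Taking $\varepsilon=1/k$ and intersecting the countably many full-measure events yields the claim. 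The factor $(CN)^{1/(2N)}\to1$ is harmless.

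On the resulting full-measure event, the sequence $M_N$ (realized at that outcome) is a deterministic sequence of monic polynomials of exact degree $N$ satisfying \eqref{eq:weighted-asymptotic-extremality}. Hence \Cref{thm:Saff-Totik-specialized}(b) gives $\nu_{M_N}\weak\mu_{\mathrm{sc}}$ there.

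The only delicate point is measurability. The events in question are defined through $\|M_N\|_N$, which is a continuous function of the coefficients of $M_N$, so they are measurable, and no further issue arises. The real work, namely the potential-theoretic zero distribution, is outsourced to the cited Saff--Totik theorem, so I expect no step here to be a genuine obstacle.
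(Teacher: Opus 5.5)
Your proof is correct and follows the same route as the paper. The steps are: Markov plus Borel--Cantelli to bound \(\|M_N\|_N^2/h_N\); the Nikolskii bound \eqref{eq:Nikolskii-bound} and \eqref{eq:Hermite-norm-root} for the upper limit; the weighted Chebyshev limit \eqref{eq:weighted-Chebyshev-limit} for the lower limit; and then \Cref{thm:Saff-Totik-specialized}(b). The only cosmetic difference is that the paper uses the single threshold \(N^{m+2}h_N\), so one Borel--Cantelli event suffices, whereas you use \(e^{2N\varepsilon}h_N\) and intersect over \(\varepsilon=1/k\); both work.
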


\begin{proof}
By \Cref{lem:least-monic-L2},
\[
  X_N:=\frac{\|M_N\|_N^2}{h_N}
\]
is a nonnegative random variable satisfying \(X_N\ge1\) and
\(\E X_N\le A N^m\).  Markov's inequality gives
\[
  \Pp\{X_N>N^{m+2}\}\le A N^{-2}.
\]
Therefore
\[
  \sum_{N=1}^{\infty}\Pp\{X_N>N^{m+2}\}<\infty.
\]
By the first Borel--Cantelli lemma, almost surely there is a random
\(N_0\) such that
\[
  \|M_N\|_N^2\le N^{m+2}h_N,
  \qquad N\ge N_0.
\]
On this event, \Cref{lem:kernel-Nikolskii} gives
\[
  \|w^NM_N\|_{\R}
  \le
  \sqrt C\,N^{(m+3)/2}\sqrt{h_N}.
\]
Taking \(N\)-th roots and using \eqref{eq:Hermite-norm-root}, we obtain
\[
  \limsup_{N\to\infty}\|w^NM_N\|_{\R}^{1/N}
  \le
  e^{-1/2}.
\]
On the other hand, by \eqref{eq:weighted-Chebyshev-number},
\(\|w^NM_N\|_{\R}\ge t_N(w)\).  Equation
\eqref{eq:weighted-Chebyshev-limit} therefore gives the reverse liminf.
Thus \eqref{eq:weighted-asymptotic-extremality} holds almost surely, and
part~\textup{(b)} of \Cref{thm:Saff-Totik-specialized} completes the proof.
\end{proof}

Now we are ready to prove \Cref{thm:main}.

\begin{proof}[Proof of \Cref{thm:main}]
Recall that
\(
  R_N(z):=\ii^{-N}P_N(\ii z),
\,\, z\in\C,
\)
is monic of degree \(N\).  By \Cref{thm:common-L2},
\[
  \E\|R_N\|_N^2
  =
  \kappa_{\beta,N}h_N.
\]
From the definition of \(\kappa_{\beta,N}\),
\[
  \kappa_{1,N}=\binom{N+2}{2},
  \qquad
  \kappa_{2,N}=N+1.
\]
Hence \(\kappa_{\beta,N}\le9N^2\) for \(N\ge1\).  Applying
\Cref{thm:polynomial-excess-criterion} with \(A=9\) and \(m=2\) gives
\[
  \nu_{R_N}\weak\mu_{\mathrm{sc}}
  \qquad\text{almost surely.}
\]
If \(r_{1,N},\dots,r_{N,N}\) are the zeros of \(R_N\), counted with
multiplicity, then \(\ii r_{1,N},\dots,\ii r_{N,N}\) are the zeros of
\(P_N\).  Therefore, for every bounded continuous function
\(f:\C\to\C\),
\begin{align*}
  \int_{\C}f(z)\,\dd\nu_{P_N}(z)
  &=
  \int_{\C}f(\ii z)\,\dd\nu_{R_N}(z)\\
  &\to
  \int_{\R}f(\ii x)\,\dd\mu_{\mathrm{sc}}(x)\\
  &=
  \int_{\C}f(z)\,
  \dd\bigl((\mathsf m_i)_\#\mu_{\mathrm{sc}}\bigr)(z)
\end{align*}
almost surely.  This is the asserted weak convergence.  
\end{proof}

\end{document}